%%%%%%%%%%%%%%%%%%%%%%%%%%%%%%%%%%%%%%%%%%%%%%%%%%%%%%%%%%%%%%%%%%%%%%%%%%%%%%%%%%%%%%%%%%%%%%%%%%%%%%%%%%%%
%%%   Answer the conjecture: All convex tessellation of Euclidean plane induces a non-hyperbolic graph   %%%
%%%%%%%%%%%%%%%%%%%%%%%%%%%%%%%%%%%%%%%%%%%%%%%%%%%%%%%%%%%%%%%%%%%%%%%%%%%%%%%%%%%%%%%%%%%%%%%%%%%%%%%%%%%%

%%%%%%%%%%%%%%%%%%%%%%%%%%%%%%%%%%%%%%%%%%%%%%%%%%%%%%%%%%%%%%%%%%%%%%%%%%%
% enviado a BMMSS
%%%%%%%%%%%%%%%%%%%%%%%%%%%%%%%%%%%%%%%%%%%%%%%%%%%%%%%%%%%%%%%%%%%%%%%%%%%

\documentclass{amsart}

\usepackage{latexsym,enumerate}
\usepackage{amsmath,amsthm,amsopn,amstext,amscd,amsfonts,amssymb,fontenc}
\usepackage[ansinew]{inputenc}
\usepackage{verbatim}
\usepackage{graphicx}
\usepackage{txfonts,pxfonts,wasysym}
\usepackage{pstricks,pst-node}

\theoremstyle{theorem}
\newtheorem{theorem}{Theorem}

\newtheorem{conjecture}{Conjecture}
\newtheorem{lemma}{Lemma}
\newtheorem{remark}{Remark}

\theoremstyle{definition}

\renewcommand{\a}{\alpha}
\renewcommand{\b}{\beta}
\newcommand{\e}{\varepsilon}
\renewcommand{\d}{\delta}
\newcommand{\g}{\gamma}

\newcommand{\p}{\partial}

 %Para numeraci\'on de f\'ormulas

\begin{document}

\title[Solving a Conjecture]{Solving a conjecture about tessellation graphs of $\mathbb R^2$}

\author[Walter Carballosa]{Walter Carballosa}
\address{Consejo Nacional de Ciencia y Tecnolog\'ia (CONACYT) \& Universidad Aut\'onoma de Zacatecas,
Paseo la Bufa, int. Calzada Solidaridad, 98060 Zacatecas, ZAC, M\'exico}
\email{wcarballosato@conacyt.mx}

%
%\author[Jos\'e M. Rodr{\'\i}guez]{Jos\'e M. Rodr{\'\i}guez}
%\address{Departamento de Matem\'aticas, Universidad Carlos III de Madrid,
%Avenida de la Universidad 30, 28911 Legan\'es, Madrid, Spain}
%\email{jomaro@math.uc3m.es}
%%\thanks{$^{(2)}$ Supported in part by a grant from CONACYT \ (CONACYT-UAG I0110/62/10), M\'exico.}

\date{\today}

\maketitle{}

\begin{abstract}
In the paper \emph{Planarity and Hyperbolicity in Graphs}, the authors present the following conjecture: every tessellation of the Euclidean plane with convex tiles induces a non-hyperbolic graph.
It is natural to think that this statement holds since the Euclidean plane is non-hyperbolic.
Furthermore, there are several results supporting this conjecture.
However, this work shows that the conjecture is false.
%Furthermore, we prove that every Delaunay triangulation of the Euclidean plane is non-hyperbolic.
\end{abstract}

{\it Keywords:}  Hyperbolic Graphs; Tessellation graphs; Geodesics; Periodic graphs. %Delaunay triangulation

{\it AMS Subject Classification numbers:}    05C10; 05C63; 05C99.

\
\section{Introduction.}%{Introduction}
Hyperbolic spaces play an important role in geometric
group theory and in the geometry of negatively curved
spaces (see \cite{ABCD, GH, G1}).
The concept of Gromov hyperbolicity grasps the essence of negatively curved
spaces like the classical hyperbolic space, Riemannian manifolds of
negative sectional curvature bounded away from $0$, and of discrete spaces like trees
and the Cayley graphs of many finitely generated groups. It is remarkable
that a simple concept leads to such a rich
general theory (see \cite{ABCD, GH, G1}).

The first works on Gromov hyperbolic spaces deal with
finitely generated groups (see \cite{G1}). %G2
Initially, Gromov spaces were applied to the study of automatic groups in the science of computation
(see, \emph{e.g.}, \cite{O}); indeed, hyperbolic groups are strongly geodesically automatic, \emph{i.e.}, there is an automatic structure on the group \cite{Cha}.

The concept of hyperbolicity appears also in discrete mathematics, algorithms
and networking. For example, it has been shown empirically
in \cite{ShTa} that the internet topology embeds with better accuracy
into a hyperbolic space than into an Euclidean space
of comparable dimension; the same holds for many complex networks, see \cite{KPKVB}.
A few algorithmic problems in
hyperbolic spaces and hyperbolic graphs have been considered
in recent papers (see \cite{ChEs, Epp, GaLy, Kra}).
Another important
application of these spaces is the study of the spread of viruses through on the
internet (see \cite{K21,K22}).
Furthermore, hyperbolic spaces are useful in secure transmission of information on the
network (see \cite{K27,K21,K22,NS}).

The study of Gromov hyperbolic graphs is a subject of increasing interest; see, \emph{e.g.}, \cite{BRSV2,CGPR1,CGPR2,CGPR3,CPoRS,Ham,RST,S2} and the references therein.

There are several definitions of Gromov hyperbolicity.
These different definitions are equivalent in the sense that if $X$ is $\d$-hyperbolic with respect to the definition $A$, then it is $\d'$-hyperbolic with respect to the definition $B$ for some $\d'$ (see, e.g., \cite{BHB,GH}).

Given a metric space $X$, we define the \emph{Gromov product} of $x,y\in X$ with base point $w\in X$ by
\begin{equation}\label{eq:productGromov}
(x|y)_w:= \frac12\, \big(d(x,w)+d(y,w) -d(x,y) \big) .
\end{equation}
We say that the \emph{Gromov product is $\d$ hyperbolic} if there is a constant $\d\geq 0$ such that
\begin{equation}\label{eq:GProd}
(x|y)_w
\ge \min \big\{ (x|z)_w, (z|y)_w \big\}- \d
\end{equation}
for every $x,y,z\in X$ and some $w\in X$.
It is well known that this definition is independent of base point, see \cite[Proposition 2.2]{ABCD} and \cite[Lemma 1.1A]{G1}.
In fact, if $X$ is a metric space, $w,w'\in X$ and the Gromov product based at $w$ is $\d$-hyperbolic, then the Gromov product based at $w'$ is $2\d$-hyperbolic.
We say that $X$ is {\it hyperbolic} if there exists a constant $\d\ge0$ such that its Gromov product is $\d$-hyperbolic for any base point, see, \emph{e.g.}, \cite{GH}.

We say that the curve $\g$ in a metric space $X$ is a
\emph{geodesic} if we have $L(\g|_{[t,s]})=d(\g(t),\g(s))=|t-s|$ for every $s,t\in [a,b]$
(then $\g$ is equipped with an arc-length parametrization).
The metric space $X$ is said \emph{geodesic} if for every couple of points in
$X$ there exists a geodesic joining them; we denote by $[xy]$
any geodesic joining $x$ and $y$; this notation is ambiguous, since in general we do not have uniqueness of
geodesics, but it is very convenient.
Consequently, any geodesic metric space is connected.
If the metric space $X$ is a graph, then the edge joining the vertices $u$ and $v$ will be denoted by $[u,v]$.
%It is clear that every geodesic metric space is path-connected.

In order to consider a graph $G$ as a geodesic metric space, we must identify
any edge $[u,v]\in E(G)$ with the real interval $[0,l]$, if $l:=L([u,v])$; therefore, any point in the interior of any edge is a point of $G$.
A connected graph $G$ is naturally equipped with a distance
defined on its points, induced by taking shortest paths in $G$.
Then, we see $G$ as a geodesic metric graph.

Throughout the paper we just deal with graphs which are connected and locally finite
(i.e., each ball of finite radius contains just a finite number of edges); we also allow edges of arbitrary length.
These conditions guarantee that the graph is a geodesic metric space.
In particular, we pay attention to the planar graphs: the graphs which are the ``boundary" (the $1$-skeleton)
of a tessellation of the Euclidean plane.

By a \emph{tessellation graph} we mean the $1$-skeleton (\emph{i.e.}, the set of $1$-cells) of
a CW $2$-complex contained in a complete Riemannian surface $\mathcal S$
(with or without boundary)
such that every point in $\mathcal S$
is contained in some face ($2$-cell) of the complex.
We assume that every closed cell is embedded in the CW-complex, \emph{i.e.},
a face of the tessellation should not be glued to
itself along a pair of edges.
The edges of a tessellation graph are just rectifiable paths in $\mathcal S$
and have the length induced by the metric in $\mathcal S$
(they can be either geodesics or not in $\mathcal S$).
Note that this class of graphs contains as particular cases many planar graphs.

In \cite{CPoRS} the authors present the following conjecture ``every tessellation of the Euclidean plane with convex tiles induces a non-hyperbolic graph".
It is natural to think that this statement holds since the Euclidean plane is non-hyperbolic.
Furthermore, there are several theorems in \cite{CPoRS} supporting this conjecture.
This work shows that the conjecture is false, see Section \ref{sect4}.
%Finally, in Section \ref{sectDT} we prove that every Delaunay triangulation of the Euclidean plane is non-hyperbolic, see Theorem \ref{t:Delaunay}.

\
\section{The conjecture and previous results}
The conjecture was presented in \cite{CPoRS} as follows.

\begin{conjecture}\label{conject}
Every tessellation graph of $\mathbb R^2\simeq\mathbb C$ with convex tiles is non-hyperbolic.
\end{conjecture}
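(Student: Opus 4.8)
The plan is to prove non-hyperbolicity by transporting the ``fat'' geodesic triangles of the Euclidean plane into the tessellation graph, using convexity of the tiles to keep graph geodesics close to Euclidean straight segments. Recall that $\mathbb{R}^2$ is non-hyperbolic: a Euclidean equilateral triangle of side $L$ has its incenter at distance of order $L$ from each of its three sides, so no fixed $\d$ makes all such triangles $\d$-thin. Since it is a standard fact that Gromov hyperbolicity is a quasi-isometry invariant among geodesic spaces, and by the base-point independence of the Gromov product recorded in the introduction, it suffices to exhibit in $G$ a sequence of geodesic triangles whose thinness constants tend to infinity.

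First I would establish a comparison between the graph metric $d_G$ and the ambient Euclidean metric on the vertices of $G$. The lower bound $d_G(u,v)\ge |u-v|$ is immediate, since every edge is a rectifiable path in $\mathbb{R}^2$ and hence no shorter than the straight segment it subtends, so any graph path from $u$ to $v$ has length at least $|u-v|$. The substantive half is an upper bound of the shape $d_G(u,v)\le C\,|u-v|+C$. Here I would use convexity: the straight segment joining $u$ to $v$ crosses a sequence of tiles, and inside each convex tile the chord can be replaced by a detour along the tile boundary whose length is controlled by the chord it replaces; concatenating these detours produces an admissible graph path whose total length is comparable to $|u-v|$.

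With the comparison in hand I would build the fat triangles directly. Pick three vertices $a,b,c$ of $G$ near the corners of a Euclidean equilateral triangle of side $L$, choose graph geodesics $[ab]$, $[bc]$, $[ca]$, and show that each stays within bounded Hausdorff distance of the corresponding Euclidean segment, the key input again being that convex tiles prevent a geodesic from making large lateral excursions. A vertex of $G$ near the Euclidean incenter then lies at graph distance of order $L$ from all three sides, by the distance comparison, so the triangle fails to be $\d$-thin once $L$ is large; letting $L\to\infty$ would give non-hyperbolicity.

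The step I expect to be the main obstacle is precisely the uniformity demanded in the second and third paragraphs: both the upper comparison $d_G(u,v)\le C|u-v|+C$ and the Hausdorff closeness of graph geodesics to straight segments require a tile-independent constant $C$, that is, uniform control on the tiles such as bounded $\diam$, bounded eccentricity, and boundedly many edges meeting at a vertex. Convexity by itself supplies none of this, since a convex tile may be arbitrarily large or arbitrarily elongated; across a region of atypically shaped tiles a graph geodesic could behave very differently from a Euclidean line, and the incenter estimate could collapse. Securing a uniform constant from convexity alone, or isolating exactly where it fails, is the crux of the argument.
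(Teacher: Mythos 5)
Your proposal attempts to \emph{prove} the conjecture, but the statement is false, and the paper's entire content is a disproof: Theorem \ref{t:conj} exhibits a hyperbolic tessellation graph $W$ of $\mathbb R^2$ with convex tiles. So no repair of your argument can succeed, and the fatal step is exactly the one you flagged in your last paragraph. The upper comparison $d_G(u,v)\le C\,|u-v|+C$ is simply unavailable: convexity gives no lower bound on the width of a tile, and the counterexample exploits this by subdividing the strip $\mathbb R\times[0,1]$ so that between abscissae $2n-2$ and $2n$ there are $2^{n-1}$ copies of the basic motif of Figure \ref{fig:Conj}, hence $2^n$ vertices on level $2n$ and exponentially thin (but convex) quadrilaterals and triangles. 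The resulting intrinsic metric is logarithmic relative to the Euclidean one: Lemma \ref{l:distEx} shows $d(v_{2n,i},v_{2n,j})=4\log_2|i-j|+O(1)$ along each level, so the graph metrically resembles a binary tree rather than the plane, and Lemma \ref{p:DistConj} gives $d_{G'}\big((x,0),(x,1)\big)\to\infty$ as $|x|\to\infty$ even though $|(x,0)-(x,1)|=1$. Thus the inclusion of the graph into $\mathbb R^2$ is not a quasi-isometric embedding, graph geodesics do not stay at bounded Hausdorff distance from Euclidean segments, and the transfer of fat Euclidean triangles (and the incenter estimate) collapses.

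It is worth noting that your instinct about where the difficulty lies was accurate, and that the paper confirms it in both directions. Under uniformity hypotheses of the kind you name, your strategy is essentially what underlies the positive results quoted in the paper: Theorem \ref{t:convexarea} (a uniform lower bound on tile areas) and Theorem \ref{t:Planar2} (inradius comparable to perimeter) both yield non-hyperbolicity, precisely because such control makes the graph quasi-isometric to $\mathbb R^2$. Without them the conjecture fails, and the paper's actual route is the opposite of yours: it constructs $W$ as a periodic graph with period graph $G'$, proves $G'$ hyperbolic (via the unit-edge model $G''_1$, the distance estimate of Lemma \ref{l:distEx}, the Gromov product comparison of Lemma \ref{l:ProdLevels}, a T-decomposition as in Theorem \ref{t:treedec}, and quasi-isometry invariance, Theorem \ref{invarianza}), and then applies the periodicity criterion Theorem \ref{t:periodic} together with the divergence condition \eqref{eq:DistConj} to conclude that $W$ is hyperbolic.
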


Note that many tessellation graphs of the Euclidean plane $\mathbb R^2$ are non-hyperbolic, and many tessellation graphs of hyperbolic space as the Poncar\'e disk $\mathbb D$ are hyperbolic.
One can think that the tessellation graphs of the Euclidean plane $\mathbb R^2$ are always non-hyperbolic, since the plane is non-hyperbolic;
however, in \cite{PRSV} the authors show a hyperbolic tessellation graph of $\mathbb R^2$.
The main aim on it is enlarge the edges of a non-hyperbolic graph until to obtain a hyperbolic graph (similar to a tessellation graph of $\mathbb D$). However, this example is a tessellation graph of $\mathbb R^2$ with not convex tiles.
Furthermore, it is easy to obtain a tessellation graph of $\mathbb D$ which is non-hyperbolic, taking huge tiles on $\mathbb D$.

The following results support the Conjecture \ref{conject}.

\begin{theorem}\cite[Theorem 2.7]{CPoRS}
\label{t:convexarea}
Given any tessellation graph $G$ of $\mathbb R^2$ with convex tiles $\{F_n\}$. If $\inf_n A(F_n)>0$, then $G$ is not hyperbolic.
%Suppose that a graph $G$ is the $1$-skeleton of a tessellation of $\mathbb R^2$ with convex tiles $\{F_n\}$. If $\inf_n A(F_n)>0$, then $G$ is not hyperbolic.
\end{theorem}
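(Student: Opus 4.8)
The plan is to show that a tessellation graph $G$ of $\mathbb{R}^2$ with convex tiles of uniformly positive area cannot be hyperbolic, by exhibiting arbitrarily large geodesic triangles that are far from being thin. The idea is that a lower bound on tile areas forces an upper bound on the number of edges meeting near any bounded region and, more importantly, prevents the edges from being too short relative to the geometry of the plane; this should let me relate distances in $G$ to Euclidean distances in $\mathbb{R}^2$ up to multiplicative and additive constants, i.e.\ obtain a quasi-isometry (or at least a quasi-isometric embedding) between $G$ and $\mathbb{R}^2$. Since $\mathbb{R}^2$ is not hyperbolic and hyperbolicity is a quasi-isometry invariant for geodesic spaces, this would finish the proof.

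First I would use the condition $\inf_n A(F_n) > 0$ to control the local combinatorial structure: a convex tile of area at least $a_0 > 0$ has diameter bounded below, and by convexity its boundary edges cannot all be arbitrarily short while enclosing that area, so there is a uniform lower bound on the perimeter of each tile. Conversely, I expect one also needs some control from above to run a quasi-isometry argument cleanly; the point of Theorem~\ref{t:convexarea} as stated, however, is only the lower area bound, so the argument must extract the non-hyperbolicity purely from below. The key geometric step is the following: pick a base tile and, for large $R$, consider three vertices $x,y,z$ on the graph that are the images of three points of $\mathbb{R}^2$ forming a large Euclidean triangle of side comparable to $R$. I would compare a geodesic $[xy]$ in $G$ with the Euclidean segment joining the corresponding points, arguing that the graph geodesic stays within a bounded Euclidean neighborhood of that segment because each tile it crosses contributes bounded displacement (again using the area bound to prevent long thin excursions).

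The heart of the matter is then to show that the geodesic triangle with vertices $x,y,z$ is not $\d$-thin for any fixed $\d$ as $R \to \infty$. Equivalently, working with the Gromov product from \eqref{eq:productGromov} and the criterion \eqref{eq:GProd}, I would produce points whose Gromov product fails the inequality by an amount growing with $R$. Concretely, take the midpoint $m$ of the side opposite to a vertex in the Euclidean picture; its graph-distance to the other two sides grows linearly in $R$ because, by the comparison established above, graph distance and Euclidean distance differ only by a bounded factor, and in $\mathbb{R}^2$ the corresponding distance is $\Theta(R)$. This forces the hyperbolicity constant to exceed any prescribed bound, contradicting hyperbolicity.

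The main obstacle I anticipate is the comparison step between graph geodesics and Euclidean segments. The lower area bound alone does not immediately bound edge lengths from above, and in principle a tessellation could have a few very long edges; I would need to argue that convexity together with $\inf_n A(F_n)>0$ still yields the two-sided distance comparison, perhaps by showing that a graph geodesic cannot traverse too many tiles per unit Euclidean length and that each traversal is efficient. Making this quasi-isometry estimate uniform — independent of which tiles are crossed — is the delicate part; once it is in hand, the non-hyperbolicity of $\mathbb{R}^2$ transfers to $G$ by standard invariance of Gromov hyperbolicity under quasi-isometry between geodesic spaces.
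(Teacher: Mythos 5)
Your argument hinges entirely on the comparison step you yourself flag as delicate: that $\inf_n A(F_n)>0$ yields a two-sided estimate between $d_G$ and Euclidean distance (a quasi-isometry, or at least bounded displacement per crossed tile), after which non-hyperbolicity transfers from $\mathbb{R}^2$ by Theorem \ref{invarianza}. This step is not just unproven but false under the stated hypothesis, so the gap cannot be closed along these lines. A lower bound on the area of a convex tile bounds its diameter and perimeter from \emph{below}, never from above: tiles may be arbitrarily long, thin slivers of fixed area. Concretely, tessellate the left half-plane by unit squares and, for $n\ge 0$, tessellate each strip $[n,n+1]\times\mathbb{R}$ by the rectangles $[n,n+1]\times\big[(2k-1)2^{n-1},(2k+1)2^{n-1}\big]$, $k\in\mathbb{Z}$, shifted so that the horizontal edges of column $n$ lie at heights that are odd multiples of $2^{n-1}$. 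All tiles are convex with area at least $1$, so the theorem applies. Yet inside the open strip $n<x<n+1$ the $1$-skeleton consists only of horizontal edges at heights of absolute value at least $2^{n-1}$; hence any path in $G$ from $(0,0)$ to $(m+1,0)$ must reach height $|y|\ge 2^{m-1}$ while crossing column $m$ and return, giving $d_G\big((0,0),(m+1,0)\big)\ge 2^m$, while the Euclidean distance is $m+1$. The distortion is exponential: no inequality $d_G\le \alpha\, d_{\mathbb{R}^2}+\beta$ holds, graph geodesics do not stay in a bounded neighborhood of Euclidean segments, crossed tiles do not contribute bounded displacement (their diameters are unbounded), and your large Euclidean triangles therefore do not produce the non-thin geodesic triangles in $G$ that your final step requires.

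The failure is structural rather than technical. The hypothesis of the theorem deliberately admits tessellation graphs far from quasi-isometric to the plane; this is precisely why \cite{CPoRS} proves Theorem \ref{t:Planar2} separately with the hypothesis $L(\partial F_n)\le c_1 r_n$, which \emph{does} control the eccentricity of tiles and would support the kind of comparison you want, and why the counterexample $W$ of the present paper must use tiles whose areas tend to $0$. (Note also that this paper only quotes the statement from \cite{CPoRS} without reproducing its proof, so there is no in-paper argument to compare against; but any correct proof must extract non-thinness from the area bound by other means, e.g.\ counting or area estimates for the tiles met by large geodesic configurations, rather than by a global metric comparison of $G$ with $\mathbb{R}^2$.) Even the weaker statement that would suffice via Theorem \ref{invarianza}, namely a quasi-isometric embedding of $\mathbb{R}^2$ into $G$, is not delivered by your construction, since it is built on the same false segment-to-geodesic comparison.
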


\begin{theorem}\cite[Theorem 2.9]{CPoRS}
\label{t:Planar2}
%Suppose that a graph $G$ is the $1$-skeleton of a tessellation of $\mathbb R^2$ with convex tiles $\{F_n\}$.
Given any tessellation graph $G$ of $\mathbb R^2$ with convex tiles $\{F_n\}$.
Let us assume that there exist balls $B_{n}\subset F_n$
with radius $r_{n}$ such that $L(\p F_{n}) \le c_1 r_n$ for some positive constant $c_1$ and for every $n$.
Then $G$ is not hyperbolic.
\end{theorem}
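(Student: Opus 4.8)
The plan is to prove that $G$ is not hyperbolic by comparing it with the Euclidean plane and invoking the quasi-isometry invariance of Gromov hyperbolicity, together with the elementary fact that $\mathbb{R}^2$ is not hyperbolic (it contains arbitrarily fat geodesic triangles, so it is $\delta$-hyperbolic for no $\delta$). Throughout I would write $d$ for the intrinsic distance of $G$ and $d_e$ for the Euclidean distance, viewing $G$ as a subset of $\mathbb{R}^2$.

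First I would record the trivial lower bound: since every path in $G$ is a rectifiable path in $\mathbb{R}^2$ whose length is the Euclidean one, $d_e(x,y)\le d(x,y)$ for all $x,y\in G$. The substance is the reverse approximate bound $d(x,y)\le a\,d_e(x,y)$ with $a=a(c_1)$. To obtain it I would take the Euclidean segment $[x,y]$, list the faces $F_{i_1},\dots,F_{i_k}$ it crosses, and replace each crossing chord by the shorter boundary arc of the corresponding $\partial F_{i_j}$; since consecutive crossed faces share the crossing point (a point of $G$), these arcs concatenate into a path of $G$ from $x$ to $y$. As the chords are disjoint subsegments of $[x,y]$, their lengths sum to $d_e(x,y)$, so the whole estimate reduces to the convex-geometry fact that, for a round tile, the boundary arc subtended by a chord has length at most $a(c_1)$ times that chord. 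I expect this to be the main obstacle: the estimate fails for thin tiles, where a chord near a sharp corner subtends a disproportionately long arc, and the point is precisely that $L(\partial F_n)\le c_1 r_n$ together with $B_n\subset F_n$ forces a uniform lower bound on interior angles and on the width-to-diameter ratio, bounding the arc/chord ratio uniformly in $n$.

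With the two-sided comparison $d_e\le d\le a\,d_e$ on $G$ in hand, I would split on $\sup_n r_n$. If $\sup_n r_n<\infty$, then $\diam F_n\le \tfrac12 L(\partial F_n)\le \tfrac{c_1}{2}r_n$ is uniformly bounded, so every point of $\mathbb{R}^2$ lies within a bounded distance of $G$; hence the inclusion $G\hookrightarrow\mathbb{R}^2$ is a quasi-isometry and $G$ inherits the non-hyperbolicity of $\mathbb{R}^2$.

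The remaining case $\sup_n r_n=\infty$ is where the inclusion is no longer quasi-dense (the centre of a huge tile is far from $G$), so a direct argument is needed; this is the second delicate point. Here I would use the inscribed balls as obstacles: for a tile with $r_n$ large I would pick points on $\partial F_n$ separated by order $r_n$ and surrounding $B_n$, and consider the resulting geodesic triangle of $G$. Since $G$ avoids the interior of $B_n$, its three sides must wrap around a disc of radius $r_n$, and one shows that some side carries a point at distance of order $r_n$ from the union of the other two; letting $r_n\to\infty$ yields geodesic triangles of unbounded thinness, so $G$ is not hyperbolic. For the subcase $\inf_n r_n>0$ one may instead bypass this construction, since $A(F_n)\ge \pi r_n^2\ge \pi(\inf_n r_n)^2>0$ makes Theorem \ref{t:convexarea} directly applicable.
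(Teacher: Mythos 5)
This paper never proves Theorem \ref{t:Planar2}: it is quoted from \cite{CPoRS} as background support for Conjecture \ref{conject}, so your argument can only be judged on its own merits, not against an internal proof. On those merits, the first two thirds are essentially right. The chord--arc estimate you single out as the main obstacle is true with a constant depending only on $c_1$: the hypotheses give inradius at least $r_n$ and $\diam F_n \le \tfrac12 L(\partial F_n) \le \tfrac{c_1}{2}r_n$, and since every supporting line of $\partial F_n$ misses the inscribed ball, the angle $\psi$ between the boundary and the radial direction from the incenter satisfies $\rho\sin\psi \ge r_n$, i.e.\ $\sin\psi \ge 2/c_1$; in polar coordinates about the incenter this gives arc length at most $(c_1^2/4)\,r_n\,\Delta\phi$ over an angular window $\Delta\phi$, while the chord is at least $2r_n\sin(\Delta\phi/2)$, so arc $\le a(c_1)\cdot$chord. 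Your replacement of chords by shorter boundary arcs (convexity guarantees each segment--tile intersection is a single chord, with consecutive crossing points lying in $G$) then yields $d_e \le d_G \le a\,d_e$, and the case $\sup_n r_n<\infty$ is correctly closed by fullness of the inclusion and Theorem \ref{invarianza}.

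The genuine gap is the sentence ``its three sides must wrap around a disc of radius $r_n$'' in the case $\sup_n r_n=\infty$. Avoiding $\mathrm{int}\,B_n$ does \emph{not} force the geodesic triangle to enclose $B_n$: all three sides may route around the same side of the ball, with $[p_2p_3]$ essentially following $[p_2p_1]\cup[p_1p_3]$, and such a triangle is thin, not fat. Nothing in the hypotheses rules this out quantitatively: since $B_n\subset F_n$ forces $c_1\ge 2\pi$, the direct boundary arc of $\partial F_n$ between two of your points may cost close to $c_1 r_n \ge 2\pi r_n$, while a detour past the third vertex costs only about $(4\pi/3)r_n$, so a $G$-geodesic can legitimately prefer the ``wrong'' side; hence ``some side carries a point at distance of order $r_n$ from the union of the other two'' is unprovable for a fixed triple of vertices. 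A robust repair: any curve from $p_2$ to $p_3$ avoiding $B_n$ must cross one of two opposite rays from the incenter, so the geodesic $[p_2p_3]$ contains a point at Euclidean (hence $G$-) distance at least $c\,r_n$ from whichever of the two boundary arcs $\sigma\subset\partial F_n$ joining $p_2$ to $p_3$ lies on the other side of the ball; since $L(\sigma)\le c_1 r_n$, the exponential-divergence lemma for $\delta$-hyperbolic geodesic spaces (a geodesic stays within $\delta\log_2 L(\sigma)+O(1)$ of any rectifiable path $\sigma$ with the same endpoints) would give $c\,r_n \lesssim \delta\log_2 r_n$, a contradiction as $r_n\to\infty$. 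You do need something of this kind (or an adaptive choice of vertices): your Theorem \ref{t:convexarea} shortcut only covers $\inf_n r_n>0$ and leaves the mixed case $\inf_n r_n=0$, $\sup_n r_n=\infty$ open.
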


In \cite{CGPR2} the authors trying to solve the Conjecture \ref{conject} show that under appropriate assumptions adding or removing an infinite amount of edges to a given planar graph preserves its non-hyperbolicity.
Also, a partial answer to Conjecture \ref{conject} was given. It is shown that in order to prove this conjecture it suffices to consider tessellations graphs of $\mathbb R^2$ such that every tile is a triangle. %, see Theorem \ref{t:ConjTriang}.

\begin{theorem}\cite[Theorem 5.1]{CGPR2}
\label{t:ConjTriang}
All tessellation graphs of $\mathbb R^2$ whose tiles are convex polygons are non-hyperbolic if and only if all tessellation graphs of $\mathbb R^2$ whose tiles are triangles are non-hyperbolic.
\end{theorem}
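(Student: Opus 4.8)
The plan is to prove the two implications separately, the forward one being immediate and the reverse one carrying all the content.

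For the ``only if'' direction of Theorem~\ref{t:ConjTriang} there is nothing to do: every triangle is a convex polygon, so a tessellation graph of $\mathbb{R}^2$ whose tiles are triangles is in particular a tessellation graph with convex tiles. Hence, if all convex-tile tessellation graphs are non-hyperbolic, then so are the triangular ones. The substance lies in the ``if'' direction, which I would prove by contraposition. Assuming that some tessellation graph $G$ of $\mathbb{R}^2$ with convex tiles $\{F_n\}$ is hyperbolic, I want to produce a hyperbolic tessellation graph with triangular tiles. The natural candidate is the graph $G'$ obtained by triangulating each tile: keep all vertices and edges of $G$ and, inside each convex polygon $F_n$, add a maximal family of non-crossing diagonals. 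Because $F_n$ is convex, every such diagonal is a straight segment contained in $F_n$, so $G'$ is again a tessellation graph of $\mathbb{R}^2$, now with triangular tiles, with $V(G')=V(G)$ and $E(G)\subset E(G')$. It then suffices to show that $G'$ is hyperbolic.

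To compare the two metrics, note that since $G'$ is obtained from $G$ only by adding edges, the identity on vertices is $1$-Lipschitz from $G$ to $G'$, as distances can only decrease. The first step is therefore to build a reverse comparison: given a geodesic of $G'$, reroute each maximal portion that runs through the diagonals of a single tile $F_n$ to a path along $\partial F_n$, obtaining a path of $G$ with the same endpoints. Convexity of the tiles is what keeps this rerouting well defined, since each diagonal, and hence each rerouted segment, stays inside a single tile.

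The main obstacle is precisely this rerouting estimate. A diagonal of a long, thin tile can be far shorter than any boundary arc joining its endpoints, so triangulating a thin tile inevitably introduces a genuine shortcut across it; consequently $G$ and $G'$ need not be quasi-isometric through the identity, and one cannot transfer hyperbolicity by a black-box quasi-isometry argument. The heart of the proof is to show that these shortcuts do not change the hyperbolicity status. Here I would exploit two structural features: first, that the diagonals added inside a tile form a triangulated polygon, whose dual is a tree, so that no grid-like (hence non-hyperbolic) pattern is created within a tile; and second, that in a genuine tessellation each tile is glued to its neighbours along its edges, so a shortcut produced inside $F_n$ is coarsely realised by paths of $G$ through the tiles adjacent to $F_n$, making its effect on distances controllable. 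Combining these observations with the edge-addition stability results of \cite{CGPR2}, read in the hyperbolic direction, I would estimate the Gromov product $(x|y)_w$ in $G'$ in terms of the corresponding product in $G$ and conclude that $G'$ inherits the hyperbolicity constant of $G$ up to a fixed multiplicative loss. This Gromov-product estimate, required to be uniform over all tiles, is the step I expect to be hardest.
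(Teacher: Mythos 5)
Your logical set-up is fine (the forward direction is indeed trivial, and the contrapositive is the right way to attack the reverse direction), but the pivotal step is not just hard --- it is false as stated, and this very paper contains the counterexample. You claim that for an \emph{arbitrary} maximal family of non-crossing diagonals the triangulated graph $G'$ inherits hyperbolicity from $G$ ``up to a fixed multiplicative loss'' in the Gromov product. The closing paragraph of Section \ref{sect4} refutes exactly this: the tessellation graph $W$ is hyperbolic with convex tiles (quadrilaterals and triangles); triangulating each quadrilateral with its \emph{largest} diagonal gives a hyperbolic triangulation, but triangulating with the \emph{smallest} diagonal gives a triangulation $T$ of $\mathbb{R}^2$ that is non-hyperbolic --- its period graph is still hyperbolic, yet the added shortcuts keep $d_{T}\big((x,0),(x,1)\big)$ bounded, so Theorem \ref{t:periodic} forces $T$ to be non-hyperbolic. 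Hence no uniform Gromov-product comparison between $G$ and an arbitrary triangulation $G'$ can exist: whether $G'$ is hyperbolic genuinely depends on \emph{which} diagonals are added. Note also that your two proposed structural rescues do not address the real obstruction: in $T$ each single tile is perfectly tame (its dual is trivially a tree, and it is glued to its neighbours along edges as in any tessellation); what kills hyperbolicity is the accumulation of bounded shortcuts across infinitely many tiles, a global effect invisible tile-by-tile.

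Beyond that, be aware that the paper does not prove this statement at all --- it quotes it as \cite[Theorem 5.1]{CGPR2} --- so the burden of the reverse direction lies entirely in that reference, and the edge-addition results of \cite{CGPR2} you invoke are not a black box for arbitrary new edges: their hypotheses require, roughly, that each added edge have length uniformly comparable to the distance in $G$ between its endpoints, which is precisely what the smallest-diagonal choice in $T$ violates and what the largest-diagonal choice in $W$ satisfies. So the missing idea in your proposal is a \emph{selection} lemma: one must triangulate each convex tile by diagonals whose lengths are uniformly comparable to the corresponding distances in $G$ (and handle tiles with unboundedly many sides or unbounded tiles, where ``maximal family of diagonals'' alone need not even produce triangles), after which the identity map is a quasi-isometry and Theorem \ref{invarianza} transfers hyperbolicity. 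Without that selection step, the argument as written proves a false statement about arbitrary triangulations.
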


The following definitions and results will be useful in Section \ref{sect4}.

Let $(X,d_X)$ and $(Y,d_Y)$  be two (geodesic) metric spaces. A map $f: X\longrightarrow Y$ is said to be
an $(\alpha, \beta)$-\emph{quasi-isometric embedding}, with constants $\alpha\geq 1,\
\beta\geq 0$ if for every $x, y\in X$:
$$
\alpha^{-1}d_X(x,y)-\beta\leq d_Y(f(x),f(y))\leq \alpha d_X(x,y)+\beta.
$$
The function $f$ is $\varepsilon$-\emph{full} if
for each $y \in Y$ there exists $x\in X$ with $d_Y(f(x),y)\leq \varepsilon$.

A map $f: X\longrightarrow Y$ is said to be
a \emph{quasi-isometry} if there exist constants $\alpha\geq 1,\
\beta,\varepsilon \geq 0$ such that $f$ is an $\varepsilon$-full
$(\alpha, \beta)$-quasi-isometric embedding.

\smallskip

A fundamental property of hyperbolic spaces is the following:

\begin{theorem}[Invariance of hyperbolicity]\label{invarianza}
Let $f:X\longrightarrow Y$ be an $(\alpha,\beta)$-quasi-isometric embedding between the geodesic metric spaces $X$ and $Y$.
If $Y$ is hyperbolic, then $X$ is hyperbolic.
%Furthermore, if $Y$ is $\d$-hyperbolic, then $X$ is $\d'$-hyperbolic, where $\d'$ is a constant which just depends on $\alpha,\beta,\d$.

Furthermore, if $f$ is $\e$-full for some $\e\ge0$ (a quasi-isometry), then $X$ is
hyperbolic if and only if $Y$ is hyperbolic.
%Furthermore, if $X$ is $\d$-hyperbolic, then $Y$ is $\d'$-hyperbolic, where $\d'$ is a constant which just depends on $\alpha,\beta,\d,\e$.
\end{theorem}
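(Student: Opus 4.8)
The plan is to prove hyperbolicity of $X$ via the stability of quasi-geodesics in the target space $Y$, working with the thin-triangles formulation of hyperbolicity (equivalent to the Gromov-product definition by the equivalence of definitions recalled above). The starting observation is that $f$ sends geodesics to quasi-geodesics: if $\g\colon[a,b]\to X$ is a geodesic, then the image $f\circ\g$ satisfies $\a^{-1}|s-t|-\b\le d_Y(f\g(s),f\g(t))\le \a|s-t|+\b$ for all $s,t\in[a,b]$, so it is an $(\a,\b)$-quasi-geodesic in $Y$.

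The key tool I would invoke is the \emph{Morse lemma} (stability of quasi-geodesics): since $Y$ is hyperbolic, there is a constant $H=H(\d,\a,\b)$ such that every $(\a,\b)$-quasi-geodesic in $Y$ lies within Hausdorff distance $H$ of any geodesic of $Y$ joining its endpoints. Granting this, take an arbitrary geodesic triangle $T=[xy]\cup[yz]\cup[zx]$ in $X$. Its image under $f$ is a triangle whose three sides are $(\a,\b)$-quasi-geodesics; let $T'$ be a geodesic triangle in $Y$ with vertices $f(x),f(y),f(z)$. By the Morse lemma each side of $f(T)$ stays within $H$ of the corresponding side of $T'$, and by thinness of $T'$ in $Y$ each side of $T'$ lies in the $\d'$-neighbourhood of the other two, for a constant $\d'$ produced by the equivalence of definitions.

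Combining these two facts I would estimate, for any point $p$ lying on a side of $T$, that $f(p)$ is within distance $2H+\d'$ of the image of the union of the two opposite sides; hence there is a point $q$ on those sides with $d_Y(f(p),f(q))\le 2H+\d'$. Applying the lower quasi-isometry bound $\a^{-1}d_X(p,q)-\b\le d_Y(f(p),f(q))$ then gives $d_X(p,q)\le \a(2H+\d'+\b)$, a bound independent of $T$ and of $p$. Thus every geodesic triangle in $X$ is uniformly thin, so $X$ is hyperbolic. For the final ``if and only if'' statement, I would observe that an $\e$-full $(\a,\b)$-quasi-isometry admits a quasi-inverse $g\colon Y\to X$ which is again a quasi-isometric embedding; applying the already proven implication to $g$ shows that hyperbolicity of $X$ forces hyperbolicity of $Y$, yielding the equivalence.

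The main obstacle is the Morse lemma itself: establishing that quasi-geodesics remain uniformly close to genuine geodesics in a hyperbolic space is the only genuinely nontrivial ingredient, and the dependence of $H$ on $\d,\a,\b$ is what ultimately controls all the constants. The remaining steps are essentially bookkeeping: transporting the thinness estimate back through the lower quasi-isometry bound, and converting between the thin-triangles and Gromov-product formulations of hyperbolicity.
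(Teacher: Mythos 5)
The paper does not prove this theorem at all: it is quoted as a standard, fundamental property of hyperbolic spaces (see the references \cite{GH,G1} in the paper), so there is no proof to compare against. Your argument is exactly the classical one from the literature --- geodesics map to $(\alpha,\beta)$-quasi-geodesics, the Morse lemma (stability of quasi-geodesics in a hyperbolic space) transports thinness of triangles back through the lower quasi-isometry bound, and the $\e$-full case follows by constructing a quasi-inverse and applying the first part --- and it is correct, with the Morse lemma rightly identified as the one genuinely nontrivial ingredient invoked as a black box.
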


We say that a vertex $v$ of a graph $G$ is a \emph{cut-vertex} if
$G \setminus \{v\}$ is not connected.
Given a graph $G$, a family of subgraphs $\{G_n\}_{n\in \Lambda}$ of
$G$ is a \emph{T-decomposition} of $G$ if $\cup_n G_n=G$ and $G_n\cap G_m$ is either a cut-vertex or the empty set for each $n\neq m$.
We will need the following result, which allows to obtain global information about the
hyperbolicity of a graph from local information.

\begin{theorem}\cite[Theorem 5]{BRSV2}
\label{t:treedec}
Let $G$ be any graph and let $\{G_n\}_n$ be any
T-decomposition of $G$. Then $\d(G)=\sup_n \d(G_n)$.
\end{theorem}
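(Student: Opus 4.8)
The plan is to read $\d(G)$ as the sharp thin-triangles constant, i.e.\ the infimum of those $\d\ge 0$ for which every geodesic triangle $T=\{x,y,z\}$ is $\d$-thin (each side lies in the $\d$-neighborhood of the union of the other two), so that $\d(G)=\sup_T(\text{thinness of }T)$. I would establish the two inequalities $\d(G)\ge\sup_n\d(G_n)$ and $\d(G)\le\sup_n\d(G_n)$ separately. Everything rests on one structural lemma: \emph{each piece $G_n$ is isometrically, indeed convexly, embedded in $G$} — that is, $d_G(a,b)=d_{G_n}(a,b)$ for $a,b\in G_n$ and every $G$-geodesic joining two points of $G_n$ stays inside $G_n$ — \emph{and, dually, a geodesic joining points of two different pieces factors through the separating cut-vertices}. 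Both facts follow from the cut-vertex excursion principle: if $v$ is a cut-vertex and a path leaves one component of $G\setminus\{v\}$, it can re-enter that component only through $v$ again, so the excursion is a loop based at $v$ which, having positive length, can be deleted to shorten the path.

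Granting the structural lemma, the lower bound is immediate. For each $n$, a geodesic triangle of $G_n$ is, by convex embedding, also a geodesic triangle of $G$ with exactly the same sides and the same thinness (distances and nearest-point distances inside $G_n$ coincide with those in $G$). Hence $\d(G)\ge\d(G_n)$ for every $n$, and therefore $\d(G)\ge\sup_n\d(G_n)$.

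For the upper bound I would take an arbitrary geodesic triangle $T=\{x,y,z\}$ of $G$ and bound its thinness by $\sup_n\d(G_n)$. Since each side has finite length and $G$ is locally finite, $T$ is compact and meets only finitely many pieces; restricting to these pieces gives a \emph{finite} T-decomposition of the subgraph $H$ they span. Now $T$ is still a geodesic triangle in $H$, and since passing from $H$ to $G$ can only shrink distances, its thinness in $G$ is at most its thinness in $H$, so I may assume there are finitely many pieces. The cut-vertex condition forces the pieces to be glued in a tree-like pattern (a cyclic gluing would make some shared vertex non-separating, contradicting that intersections are cut-vertices), so this finite block structure has a leaf: a piece $G_\ell$ meeting the rest $G'=\bigcup_{n\ne\ell}G_n$ in a single cut-vertex $v$, with $G=G_\ell\cup_v G'$. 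By induction $\d(G')\le\sup_{n\ne\ell}\d(G_n)$, and it remains to prove $\d(G_\ell\cup_v G')\le\max\{\d(G_\ell),\d(G')\}$. Writing $A=G_\ell$ and $B=G'$, the only nontrivial configuration is two vertices on one side and one on the other, say $x\in A$ and $y,z\in B$. Then $[xy]$ and $[xz]$ both pass through $v$ and split as $[xv]_A\cup[vy]_B$ and $[xv]'_A\cup[vz]_B$, while $[yz]\subset B$. The two sub-geodesics $[xv]_A$ and $[xv]'_A$ form a geodesic bigon in $A$, so each lies in the $\d(A)$-neighborhood of the other, and $\{[vy]_B,[vz]_B,[yz]\}$ is a genuine geodesic triangle in $B$, hence $\d(B)$-thin. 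Concatenating these estimates shows that every side of $T$ lies in the $\max\{\d(A),\d(B)\}$-neighborhood of the union of the other two, i.e.\ $T$ is $\max\{\d(A),\d(B)\}$-thin. Together with the lower bound this yields $\d(G)=\sup_n\d(G_n)$.

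The main obstacle is the structural lemma together with the finiteness reduction. Establishing convexity of the pieces and the leaf structure rigorously requires pinning down that the cut-vertex hypothesis genuinely forces a tree of blocks and that the components of $G\setminus\{v\}$ interact with the pieces as expected; the subsequent thin-triangle bookkeeping, including the degenerate (bigon) case, is then routine.
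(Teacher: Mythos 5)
There is a genuine gap, and it sits exactly where you located the ``main obstacle'': the structural lemma is not established, and your proposed justification for the tree-like gluing is false as stated. The parenthetical claim that ``a cyclic gluing would make some shared vertex non-separating'' does not follow from the definition of T-decomposition given in the paper, because being a cut-vertex of $G$ is a global condition that can hold for extraneous reasons without $v$ separating the two pieces that meet at $v$. Concretely, let $G$ be a triangle with vertices $u,v,w$ and one pendant edge attached at each of $u,v,w$; take $G_1$ to be the side $wu$ together with the pendant at $u$, $G_2$ the side $uv$ with the pendant at $v$, and $G_3$ the side $vw$ with the pendant at $w$. Then $G_1\cup G_2\cup G_3=G$ and the pairwise intersections are $\{u\},\{v\},\{w\}$, each a cut-vertex of $G$ (each disconnects its pendant leaf), so this family satisfies the stated definition of a T-decomposition; yet the gluing is cyclic, the pieces are not isometrically embedded (a geodesic between points of $G_1$ and $G_2$ can run through $G_3$ and avoid $u$, so geodesics do \emph{not} factor through the shared cut-vertex), and in fact each $G_i$ is a tree while $G$ contains an isometrically embedded cycle, so $\sup_i\delta(G_i)=0<\delta(G)$. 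Thus your excursion-principle argument breaks down precisely because a piece need not be a union of components of $G\setminus\{v\}$ together with $v$; worse, the equality $\delta(G)=\sup_n\delta(G_n)$ is itself false under the literal definition, so no proof can close this gap without invoking the intended stronger reading, implicit in \cite{BRSV2}: each shared vertex must separate the two pieces meeting there (equivalently, every cycle of $G$ lies in a single piece, as happens for the canonical decomposition along cut-vertices). Your proposal neither assumes nor derives this property, and both of your inequalities rest on it.

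If that separation property is granted, the remainder of your argument is essentially sound but needs two repairs. First, when you restrict to the finitely many pieces met by a compact geodesic triangle (legitimate, since each edge lies in exactly one piece), you must verify that the restricted family is again a T-decomposition \emph{in the strengthened sense}: a cut-vertex of $G$ need not be a cut-vertex of the finite union $H$, so the induction has to carry the separation property rather than the literal definition; the existence of a leaf then follows because a finite family glued tree-wise forms a finite tree of pieces. Second, the convexity of $G'$ in $G_\ell\cup_v G'$ must be quoted in the all-vertices-in-$G'$ case. The two-piece estimate itself (geodesic bigon in $A$, genuine geodesic triangle $\{v,y,z\}$ in $B$, yielding $\max\{\delta(A),\delta(B)\}$-thinness) is correct. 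For comparison, the proof in \cite{BRSV2} avoids the finiteness reduction and the leaf induction altogether: it first shows that $\delta(G)$ is attained on geodesic triangles that are cycles (simple closed curves) and then observes that, under the separation property, every cycle is contained in a single piece; combined with the isometric embedding of the pieces, this gives both inequalities directly, which is both shorter and free of the inductive bookkeeping your route requires.
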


\
\section{Solving the conjecture}\label{sect4}

In \cite{CGPR1,CGPR3} the authors obtain interesting results characterizing the hyperbolicity of many periodic graphs.
Hence, we deal with periodic graphs.

A \emph{geodesic line} is a geodesic with domain $\mathbb R$.
We say that a graph $G$ is \emph{periodic} if there exist a geodesic line $\g_0$ and an isometry $T$ of $G$ with the following properties:
\begin{itemize}
\item[(1)] {$T\g_0 \cap \g_0 = \emptyset$,}
\item[(2)] {$G \setminus\g_0$ has two connected components,}
\item[(3)] {$G \setminus \{\g_0 \cup T\g_0\}$ has at least three connected components, two of them, $G_1$ and $G_2$, satisfy $\p G_1 \subset \g_0$ and $\p G_2 \subset T \g_0$, and the subgraph $G':= G \setminus \{G_1 \cup G_2\}$ is connected and $\cup_{n\in\mathbb{Z}}T^n(G')=G$.}
\end{itemize}

Such subgraph $G'$ is called a \emph{period graph} of $G$.
In what follows, $G$ will denote a periodic graph and $G'$ a period graph of $G$.

The following result will be useful.

\begin{theorem}\cite[Theorem 1.1]{CGPR3}
\label{t:periodic}
Let $G$ be a periodic graph with $\inf_{z\in\g_0} d_G(z, T z) > 0$. Then $G$ is hyperbolic if only if $G'$ is
hyperbolic and $\lim_{|z|\to\infty,z\in\g_0} d_G(z, T z) = \infty$.
\end{theorem}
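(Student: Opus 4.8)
The plan is to prove both implications of the biconditional using the natural decomposition of $G$ into the strips $\{T^nG'\}_{n\in\mathbb Z}$, which are separated by the translated geodesic lines $L_n:=T^n\g_0$ (so $L_0=\g_0$ and $L_1=T\g_0$), and to keep track of the \emph{width function} $w(z):=d_G(z,Tz)$ for $z\in\g_0$. Since $T$ is an isometry, $d_G(T^nz,T^{n+1}z)=w(z)$ for every $n$, so $w$ controls the separation of any two consecutive lines. The hypothesis $\inf_{z\in\g_0}w(z)>0$ guarantees that consecutive lines stay a definite distance apart, so the ``vertical'' direction indexed by $n$ is non-degenerate. Throughout I would use the Invariance of Hyperbolicity (Theorem~\ref{invarianza}): a $(\a,\b)$-quasi-isometric embedding into a hyperbolic space forces the domain to be hyperbolic.

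For the \emph{necessity} direction, assume $G$ is hyperbolic. To get hyperbolicity of $G'$ I would first show that the inclusion $G'\hookrightarrow G$ is an isometric embedding. The key observation is that a $G$-geodesic joining two points of $G'$ never needs to enter $G_1$ or $G_2$: since $\p G_1\subset\g_0$ and $\g_0$ is a geodesic line, any excursion into $G_1$ between two points $a,b\in\g_0$ can be replaced by the subarc of $\g_0$ from $a$ to $b$, whose length equals $d_G(a,b)$ and is therefore no larger; the same argument applies to $G_2$ and $T\g_0$. Hence $d_{G'}=d_G|_{G'}$, the inclusion is a $(1,0)$-quasi-isometric embedding, and Theorem~\ref{invarianza} yields that $G'$ is hyperbolic. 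To obtain the divergence condition I would argue by contradiction: if $w(z_k)\le M$ for a sequence $z_k\in\g_0$ with $|z_k|\to\infty$, then along the stack of lines $L_n$ consecutive points are within $M$, while each $L_n$ is isometric to $\mathbb R$. Combining the bounded vertical spacing with the horizontal geodesic directions and the periodic stacking, one embeds arbitrarily large \emph{genuinely two-dimensional} flat regions into $G$ with quasi-isometry constants independent of their size. Because an Euclidean region of large diameter contains geodesic triangles that fail to be $\d$-thin for any fixed $\d$, this contradicts the hyperbolicity of $G$, so $\lim_{|z|\to\infty}w(z)=\infty$. Here I expect the delicate point to be arranging that the flat region is truly $2$-dimensional, as opposed to a mere quasi-line (a bounded-width ladder is still hyperbolic), which is exactly why the full stack $\{L_n\}$ and the divergence of the widths, rather than just the pair $\g_0,T\g_0$, must enter.

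The \emph{sufficiency} direction is the substantive one. Assume $G'$ is hyperbolic and $w(z)\to\infty$, and let me verify that geodesic triangles of $G$ are uniformly thin. First, by periodicity together with the isometric-embedding argument above, every finite union of consecutive strips $\bigcup_{|n|\le N}T^nG'$, glued along the geodesic lines $L_n$, should be hyperbolic with a constant independent of $N$; this reduces the global problem to controlling how far a geodesic ranges in the $n$-direction. The decisive idea is a \emph{divergence estimate}: because $w(z)\to\infty$, any path venturing into the region where $|z|$ is large must cross strips of large width and is therefore very long, so $G$-geodesics between given points are forced to remain within a bounded neighbourhood of the ``core'' where the widths are small. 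Once the geodesics of a triangle are confined to such a core, the triangle lives inside a uniformly hyperbolic finite union of strips, and its thinness is governed by the hyperbolicity constant of $G'$ and by the rate at which $w$ grows.

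The main obstacle I anticipate is precisely this last, global control of geodesics in $G$: turning the qualitative statement $w\to\infty$ into a quantitative divergence estimate that confines geodesics to the core and handles triangles whose vertices sit in widely separated strips. Establishing that estimate---equivalently, showing that the widening of the strips prevents the formation of fat triangles wherever the vertices are placed---is where the real work lies, and where the interplay between the hyperbolicity of the single period graph $G'$ and the growth of the separation $d_G(z,Tz)$ must be made precise.
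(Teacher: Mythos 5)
This statement is quoted in the paper from \cite[Theorem 1.1]{CGPR3} and is \emph{not} proved in this paper, so there is no internal proof to match your attempt against; I will therefore judge the proposal on its own terms, against what a complete proof must accomplish. Your first step is correct and standard: since $\g_0$ and $T\g_0$ are geodesic lines of $G$, any excursion of a path into $G_1$ (resp.\ $G_2$) entering and leaving at $a,b\in\g_0$ (resp.\ $T\g_0$) can be replaced by the subarc of the line, whose length is $d_G(a,b)$, so $d_{G'}=d_G|_{G'}$, the inclusion is a $(1,0)$-quasi-isometric embedding, and Theorem \ref{invarianza} gives that $G$ hyperbolic implies $G'$ hyperbolic. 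That half-implication is sound. Everything after it has genuine gaps.

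First, the necessity of the divergence condition. The negation of $\lim_{|z|\to\infty}d_G(z,Tz)=\infty$ only provides a (possibly very sparse) sequence $z_k\in\g_0$, $|z_k|\to\infty$, with $d_G(z_k,Tz_k)\le M$; it gives no control of the width between consecutive $z_k$'s. Your ``arbitrarily large genuinely two-dimensional flat regions with uniform constants'' then simply need not exist: the map $(t,j)\mapsto T^j\g_0(t)$ has no uniform upper distance bound when the strips are enormous between the narrow corridors at the $z_k$, so the region is not quasi-flat, and you flag this delicacy without resolving it. The workable argument is different: for fixed large $n$, form geodesic quadrilaterals with two long sides $\g_0|_{[t_k,t_l]}$ and $T^n\g_0|_{[t_k,t_l]}$ and two short sides of length at most $nM$ at the narrow points, and let $t_l-t_k\to\infty$ to violate thinness --- and even this needs the hypothesis $\inf_{z\in\g_0}d_G(z,Tz)>0$ quantitatively, to guarantee that any path from $\g_0$ to $T^n\g_0$, which must cross every intermediate line $T^j\g_0$ by the separation property (2), has length growing with $n$. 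Your sketch never uses this hypothesis beyond the vague remark that the vertical direction is ``non-degenerate.'' Second, and more seriously, the sufficiency direction is essentially missing. The assertion that $\bigcup_{|n|\le N}T^nG'$ is hyperbolic with a constant independent of $N$ is unsubstantiated: iterated gluings of hyperbolic pieces along geodesic lines degrade the constant in general (glue uniformly flat strips and you approach a Euclidean plane), and uniformity here is exactly where $d_G(z,Tz)\to\infty$ must do its work --- asserting it is restating the heart of the theorem. Likewise, the claim that geodesics are ``confined to a bounded neighbourhood of the core'' where widths are small is false as stated: the endpoints of a triangle may themselves lie where $|z|$ is huge, and travel \emph{within} a strip at large $|z|$ is cheap (only crossing between strips is expensive), so no fixed core confines all geodesics. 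As written, the proposal establishes only ``$G$ hyperbolic $\Rightarrow$ $G'$ hyperbolic'' and leaves both the divergence necessity and the entire sufficiency implication without a viable argument; for those one must consult the actual proof in \cite{CGPR3}.
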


%In order to provide a tessellation graph of $\mathbb R^2$ which is hyperbolic a graph $G'$ is shown in Figure \ref{fig:Conj}.

\begin{figure}[h]
  \centering
  \scalebox{1.9}
  {\begin{pspicture}(1.7,-.2)(8.3,1.7)
  \psline[linewidth=0.02cm,linecolor=black,linestyle=dashed](1.7,1)(8.3,1)
  \psline[linewidth=0.02cm,linecolor=black](2,1)(8,1)
  \psline[linewidth=0.02cm,linecolor=black,linestyle=dashed](1.7,0)(8.3,0)
  \psline[linewidth=0.02cm,linecolor=black](2,0)(8,0)
%  \psline[linewidth=0.02cm,linecolor=black](2,0)(2,1)
  \psline[linewidth=0.02cm,linecolor=black,linestyle=dashed](1.7,0.35)(2,0.5)(1.7,0.65)
  \psline[linewidth=0.02cm,linecolor=black,linestyle=dashed](2,0.5)(1.7,0.5)
  \psline[linewidth=0.02cm,linecolor=gray](2,0.5)(3,0)(4,0.25)(3,0.5)(2,0.5)
  \psline[linewidth=0.02cm,linecolor=gray](2,0.5)(3,1)(4,0.75)(3,0.5)
  \psline[linewidth=0.02cm,linecolor=black](4,0.25)(5,0)(6,0.125)(5,0.25)(4,0.25)
  \psline[linewidth=0.02cm,linecolor=black](4,0.25)(5,0.5)(6,0.375)(5,0.25)
  \psline[linewidth=0.02cm,linecolor=black](4,0.75)(5,0.5)(6,0.625)(5,0.75)(4,0.75)
  \psline[linewidth=0.02cm,linecolor=black](4,0.75)(5,1)(6,0.875)(5,0.75)
  \psline[linewidth=0.02cm,linecolor=black](6,0.125)(7,0)(8,0.0625)(7,0.125)(6,0.125)
  \psline[linewidth=0.02cm,linecolor=black](6,0.125)(7,0.25)(8,0.1875)(7,0.125)
  \psline[linewidth=0.02cm,linecolor=black](6,0.375)(7,0.25)(8,0.3125)(7,0.375)(6,0.375)
  \psline[linewidth=0.02cm,linecolor=black](6,0.375)(7,0.5)(8,0.4375)(7,0.375)
  \psline[linewidth=0.02cm,linecolor=black](6,0.625)(7,0.5)(8,0.5625)(7,0.625)(6,0.625)
  \psline[linewidth=0.02cm,linecolor=black](6,0.625)(7,0.75)(8,0.6875)(7,0.625)
  \psline[linewidth=0.02cm,linecolor=black](6,0.875)(7,0.75)(8,0.8125)(7,0.875)(6,0.875)
  \psline[linewidth=0.02cm,linecolor=black](6,0.875)(7,1)(8,0.9375)(7,0.875)
  \psline[linewidth=0.01cm,linecolor=black,linestyle=dashed](8,0.0625)(8.3,0.0625)
  \psline[linewidth=0.01cm,linecolor=black,linestyle=dashed](8,0.1875)(8.3,0.1875)
  \psline[linewidth=0.01cm,linecolor=black,linestyle=dashed](8,0.3125)(8.3,0.3125)
  \psline[linewidth=0.01cm,linecolor=black,linestyle=dashed](8,0.4375)(8.3,0.4375)
  \psline[linewidth=0.01cm,linecolor=black,linestyle=dashed](8,0.5625)(8.3,0.5625)
  \psline[linewidth=0.01cm,linecolor=black,linestyle=dashed](8,0.6875)(8.3,0.6875)
  \psline[linewidth=0.01cm,linecolor=black,linestyle=dashed](8,0.8125)(8.3,0.8125)
  \psline[linewidth=0.01cm,linecolor=black,linestyle=dashed](8,0.9375)(8.3,0.9375)
  \uput[90](2,1.1){$_0$}
  \uput[90](3,1.1){$_1$}
  \uput[90](4,1.1){$_2$}
  \uput[90](5,1.1){$_3$}
  \uput[90](6,1.1){$_4$}
  \uput[90](7,1.1){$_5$}
  \uput[90](8,1.1){$_6$}
  \end{pspicture}}
  \caption{Period graph $G'$ of a tessellation graph of $\mathbb R^2$ with convex tiles.}
  \label{fig:Conj}
\end{figure}
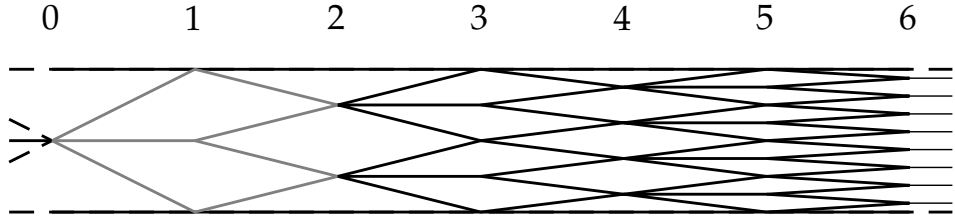

Let $G'$ be the graph showed in Figure \ref{fig:Conj}, \emph{i.e.}, $G'$ is a planar graph contained in $\mathbb R\times [0,1] \subset \mathbb R^2$ verifying the following properties
\begin{itemize}
  \item[(1)] {$\mathbb R\times\{0,1\} \subset G'$,}
  \item[(2)] {$G'$ is symmetric\footnote{Figure \ref{fig:Conj} shows a half ($\mathbb{R}e\,z >0$) of the graph $G'$.} with respect to the vertical axis,}
  \item[(3)] {in $G\cap\{z\in\mathbb C : 2n-2\le \mathbb Re \le 2n\}$ ($n\ge1$) there are $2^{n-1}$ graphs isomorphic to the subgraph in gray on Figure \ref{fig:Conj}.}
\end{itemize}
We say that a vertex $v$ of $G'$ is at level $i$ if $v$ belongs to $\{i\}\times[0,1]$; besides, $G'$ just has $2^n$ vertices at level $2n$. Denote these vertices by $v_{2n,i}$ for $1\le i\le 2^n$ with
\[
\mathbb Im \, v_{2n,i} < \mathbb Im \, v_{2n,i+1},\quad \text{for every } 1\le i\le 2^n.
\]
Consider now the tessellation graph $W$ of $\mathbb R^2$ with period graph $G'$.

In order to prove that $W$ is hyperbolic denote by
\[
  G'':=\overline{G'\setminus \mathbb R\times\{0,1\}} \bigcap \big\{z\in\mathbb C : \mathbb{R}e\,z \ge0\big\},
\]
and $G''_1$ a graph isomorphic to $G''$ with every edge of length $1$.
Note that $G''_1$ is not necessary embedded in $\mathbb R^2$; besides, every vertex of $G''_1$ at level $i$ is at distance $i$ to the vertex $v_{0,1}$.

\begin{lemma}\label{prop:DistLevels}
Let $k,n,m$ be positive integers with $1\le k\le 2^n$.
Then, $d_{G''_1}(v_{2n,k}\;,v_{2n+2m,k_m})=2m$ if $\max\{2^m k - 2^{m+1}+2, 1\} \le k_m\le \min\{ 2^m k + 2^m - 1,2^{n+m}\}$; otherwise, the distance is greater than $2m+1$.
\end{lemma}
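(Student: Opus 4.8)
The plan is to exploit the layered structure of $G''_1$. First I would record that $G''_1$ is a layered graph: every edge joins a vertex of some level $i$ to a vertex of level $i\pm1$, and no edge stays inside a level. This is forced by the hypothesis that each level-$i$ vertex lies at distance $i$ from the root $v_{0,1}$, so traversing a single edge changes the level by exactly one. Two immediate consequences follow for any vertices $u,v$: the distance $d_{G''_1}(u,v)$ is at least $|\mathrm{level}(u)-\mathrm{level}(v)|$ and has the same parity as this difference, and equality holds exactly when $u$ and $v$ are joined by a path that increases the level by one at every step (a \emph{descending} path). Specializing to $u=v_{2n,k}$ and $v=v_{2n+2m,k_m}$, the distance is either the minimal value $2m$, attained precisely when a descending path of length $2m$ exists, or else at least $2m+2$, hence greater than $2m+1$. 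Thus the statement reduces to deciding, as a function of $k_m$, whether such a descending path exists.

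Next I would analyse one descent step. Reading the even--odd and odd--even incidences of the period graph from Figure~\ref{fig:Conj}, I would check that $v_{2n,k}$ is adjacent to the intermediate (odd-level) vertices with indices $2k-2,2k-1,2k$ (those that fall in $[1,2^{n+1}-1]$), and that each intermediate vertex of index $j$ is adjacent below to exactly $v_{2n+2,j}$ and $v_{2n+2,j+1}$. Taking the union over the three intermediate indices yields the four consecutive indices $2k-2,2k-1,2k,2k+1$. Hence the set of level-$(2n+2)$ vertices reachable from $v_{2n,k}$ by a descending $2$-path is exactly the index interval $[2k-2,\,2k+1]\cap[1,2^{n+1}]$. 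The vertices lying on the removed lines $\mathbb{R}\times\{0,1\}$ contribute only further descending $2$-paths to indices already inside this interval, so they do not enlarge it.

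Then I would induct on $m$, writing $S_m$ for the set of level-$(2n+2m)$ vertices reachable from $v_{2n,k}$ by a descending path. Since descending reachability composes, $S_m$ is the union of the one-step images of the vertices in $S_{m-1}$. For a contiguous block of indices $[a,b]$ the one-step images $[2k'-2,2k'+1]$, $k'\in[a,b]$, overlap consecutively, so their union is the interval $[2a-2,\,2b+1]$; hence $S_m$ is an index interval $[L_m,R_m]\cap[1,2^{n+m}]$ governed by $L_0=R_0=k$, $L_m=2L_{m-1}-2$ and $R_m=2R_{m-1}+1$. I would check that truncation to $[1,2^{n+\cdot}]$ commutes with the expansion step (clipping the lower endpoint to $1$ before or after applying the map gives the same value, and likewise at the top), so the induction is clean. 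Solving the recursions gives $L_m=2^mk-2^{m+1}+2$ and $R_m=2^mk+2^m-1$, so after truncation $S_m$ is precisely the index set $[\max\{2^mk-2^{m+1}+2,1\},\ \min\{2^mk+2^m-1,2^{n+m}\}]$. Together with the reduction of the first paragraph this proves the lemma.

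I expect the real work to be in the second step: pinning down the exact local incidence pattern of the period graph and confirming that the deleted boundary lines add no new reachability, together with checking that the truncation at the extreme indices is exactly what the one-step map produces. Once the elementary descent interval $[2k-2,\,2k+1]$ is correctly identified, the induction and the geometric-series computation are routine.
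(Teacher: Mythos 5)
Your proposal is correct and follows essentially the same route as the paper: the parity-plus-level lower bound reduces the lemma to deciding when a strictly descending path exists, the one-step descent from $v_{2n,k}$ reaches exactly the index interval $\big[\max\{2k-2,1\},\min\{2k+1,2^{n+1}\}\big]$, and iterating (your recursions $L_m=2L_{m-1}-2$, $R_m=2R_{m-1}+1$ are precisely the paper's geometric sums $1+2+\cdots+2^{m-1}=2^m-1$ and $2+\cdots+2^m=2^{m+1}-2$) yields the stated interval. Your explicit induction on the reachable interval, including the check that truncation at the extreme indices commutes with the expansion step, is simply a more detailed write-up of the boundary-index verification that the paper compresses into ``it follows easily.''
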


\begin{proof}
Note that distance between two vertices at different levels is at least the difference of their levels. Clearly $d_{G''_1}(v_{2n,k}\;,v_{2n+2m,k_m})$ is an even number.
Hence, it suffices to prove that $d_{G''_1}(v_{2n,k}\;, v_{2n+2m,2^m k + 2^m -1})= 2m$ but $d_{G''_1}(v_{2n,k}\;, v_{2n+2m,2^m k + 2^m})> 2m$ if $2^m k + 2^m -1\le 2^{n+m}$ and $d_{G''_1}(v_{2n,k}\;, v_{2n+2m,2^m k - 2^{m+1} +2})= 2m$ but $d_{G''_1}(v_{2n,k}\;, v_{2n+2m,2^m k - 2^{m+1}+1})> 2m$ if $2^m k - 2^{m+1}+1 \ge 1$.
Note that $d_{G''_1}(v_{2n,k}\;,v_{2n+2,k_1})=2$ if $\max\{2k-2,1\} \le k_1\le \min\{2k+1,2^{n+1}\}$; otherwise, the distance is greater than $3$.
Then, since $1+2+\ldots+2^{m-1}=2^m-1$, we have $d_{G''_1}(v_{2n,k}\;, v_{2n+2m,2^m k + 2^m -1})= 2m$ if $k\le 2^n-1$ (\emph{i.e.}, $2^m k + 2^m -1\le 2^{n+m}$); furthermore, it follows easily that $d_{G''_1}(v_{2n,k}\;, v_{2n+2m,2^m k + 2^m})> 2m$ if $k \le 2^{n}-1$.
Similarly, since $2^m+\ldots+2=2^{m+1}-2$, we obtain $\ d_{G''_1}(v_{2n,k}\;, v_{2n+2m,2^m k - 2^{m+1} +2})= 2m$ and $d_{G''_1}(v_{2n,k}\;, v_{2n+2m,2^m k - 2^{m+1}+1})> 2m$ if $k \ge 2$.
Thus, we have $d_{G''_1}(v_{2n,k}\;,v_{2n+2m,k_m})=2m$ for $\max\{2^m k - 2^{m+1}+2, 1\} \le k_m\le \min\{ 2^m k + 2^m - 1,2^{n+m}\}$; otherwise, the distance is greater than $2m+1$.
\end{proof}

We will use the following facts.

\begin{remark}\label{r:DistLevelsUp}
For $1\le k\le 2^{n}$ the vertex $v_{2n,k}$ is at distance $2m$ to at most $2^{m+1}+2^m-2$ vertices at level $2n+2m$. Moreover, since $m\ge1$ we have $2^{m+1}\le 2^{m+1}+2^m-2< 2^{m+2}$.
\end{remark}

\begin{remark}\label{r:DistLevelsDown}
For $1\le l\le 2^{n+m}$ the vertex $v_{2(n+m),l}$ is at distance $2m$ to at most three vertices at level $2n$, and furthermore these vertices at level $2n$ have consecutive indices.
\end{remark}

\begin{lemma}\label{l:distEx}
Let $v_{2n,i},v_{2n,j}$ be two different vertices of $G''_1$ at level $2n$. Then, there is a fixed constant $\a$ such that
\begin{equation}\label{eq:distEx}
      \left|d_{G''_1}(v_{2n,i}\;,v_{2n,j}) - 4 \log_2 |i-j|\right| \le \a.
\end{equation}
\end{lemma}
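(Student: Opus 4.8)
The plan is to bound $D:=d_{G''_1}(v_{2n,i},v_{2n,j})$ above and below by $4\log_2|i-j|$ up to an additive constant, treating the two inequalities separately and then absorbing all error terms into a single $\a$. Throughout I use that $G''_1$ is layered, so that each edge joins consecutive levels and the distance from $v_{0,1}$ to any vertex at level $r$ equals $r$, together with the range relation of Lemma \ref{prop:DistLevels} and the counting estimates of Remarks \ref{r:DistLevelsUp} and \ref{r:DistLevelsDown}. Assume without loss of generality that $i<j$.

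For the upper bound I would construct an explicit path through a common ancestor. By Lemma \ref{prop:DistLevels}, the level-$2n$ vertices lying at distance exactly $2m$ below a vertex $v_{2n-2m,l}$ form the index interval $[\,2^m l-2^{m+1}+2,\ 2^m l+2^m-1\,]$, of length $3\cdot 2^m-2$. A short computation shows that as soon as $2^m\ge(|i-j|+3)/2$, i.e. $m\ge\log_2(|i-j|+3)-1$, one can choose an integer $l$ so that both $i$ and $j$ lie in this interval; then $v_{2n-2m,l}$ is a common ancestor at distance $2m$ from each endpoint, giving a path of length $4m$. Taking $m$ minimal yields $D\le 4\log_2|i-j|+C_1$. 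When the required level $2n-2m$ would be negative I instead route through the root $v_{0,1}$, at distance $2n$ from either endpoint; since in that regime $|i-j|$ is comparable to $2^n$, the resulting bound $4n$ still differs from $4\log_2|i-j|$ by a constant.

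For the lower bound I would argue that few level-$2n$ vertices are close to $v_{2n,i}$. Fix a geodesic $P$ from $v_{2n,i}$ to $v_{2n,j}$ and let $w$ be a vertex of $P$ of minimal level $\ell=2n-2p$; since $P$ must descend from level $2n$ to level $\ell$ and climb back, its length is at least $4p$, so $p\le D/4$. Both endpoints are reached from $w$ by descending, and by Remark \ref{r:DistLevelsDown} each level-$2n$ vertex has at most three ancestors at level $\ell$, with consecutive indices; the union of the descendant intervals of these ancestors, each of length $3\cdot 2^p-2$ by Lemma \ref{prop:DistLevels}, is again an interval of length $O(2^p)$. Hence $i$ and $j$ lie in a common index interval of length $O(2^p)$, so $|i-j|\le C_2\,2^{p}\le C_2\,2^{D/4}$, which rearranges to $D\ge 4\log_2|i-j|-C_3$. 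Combining the two estimates and setting $\a:=\max\{C_1,C_3\}$ gives \eqref{eq:distEx}.

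The main obstacle is the lower bound, specifically justifying that the level-$2n$ vertices within a given distance of $v_{2n,i}$ really form a short, contiguous block of indices. This amounts to showing that a shortest path between two same-level vertices may be taken to descend monotonically to a common ancestor and then ascend, so that the minimal-level vertex $w$ of $P$ is reached from each endpoint by a monotone, hence length-$2p$, subpath and Remark \ref{r:DistLevelsDown} applies with $m=p$. I expect this monotonicity to follow from the layered, branching structure of $G''_1$ (there are no horizontal edges, and the only shortcuts are the overlaps of descendant intervals already quantified in Lemma \ref{prop:DistLevels}), but making it rigorous, along with the careful treatment of the index-range endpoints $1$ and $2^n$ and of the root, is where the real work lies.
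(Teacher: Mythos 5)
Your upper bound is correct and is essentially the paper's own construction: using the interval description of Lemma~\ref{prop:DistLevels} (equivalently Remark~\ref{r:DistLevelsUp}) you find a common ancestor roughly $\log_2|i-j|$ double-levels down, giving $d_{G''_1}(v_{2n,i},v_{2n,j})\le 4\log_2|i-j|+C_1$, with the root covering the degenerate case. That half is fine.

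The lower bound, however, has exactly the gap you flag, and it is not a technicality that ``follows from the layered structure'' --- it is the whole content of the lemma. You take an \emph{arbitrary} geodesic $P$, let $w$ be its minimal-level vertex, and then treat $w$ as a common ancestor at distance equal to the level gap from both endpoints, so that Remark~\ref{r:DistLevelsDown} and the descendant intervals apply with $m=p$. For an arbitrary geodesic this fails. Since consecutive same-level vertices always share a neighbour at an adjacent level, there are zigzag paths of length $2|i-j|$ staying within one level of $2n$; for instance, for $j-i=3$ with $i$ odd no vertex at level $2n-2$ has both $i$ and $j$ in its distance-$2$ interval $[2k-2,2k+1]$, the distance equals $6$, and it is realized by a zigzag geodesic whose minimal-level vertex lies at the \emph{odd} level $2n-1$ and at distance $5$ from one endpoint. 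So for such a geodesic $w$ is not at distance ``level gap'' from the endpoints, $\ell=2n-2p$ need not even hold with $p$ an integer, and Remark~\ref{r:DistLevelsDown}, which counts vertices at distance \emph{exactly} twice the level difference, says nothing about $w$. Nothing in your sketch rules out such bottom-level zigzag segments inside long geodesics, and bounding the horizontal drift they can accumulate is precisely the inequality you are trying to prove; your fallback formulation (``level-$2n$ vertices within distance $D$ of $v_{2n,i}$ form an index block of length $O(2^{D/4})$'') is the lemma restated, so deferring it is circular. The paper closes this hole without any monotonicity or normal-form claim, arguing by contradiction: assuming $d\le 4(r-2)$ with $r=\lfloor\log_2(j-i)\rfloor$, the geodesic is confined to levels $\ge 2n-2r+5$; splitting it at the extreme vertices $v_{2m,i'},v_{2m,j'}$ of its lowest even level $2m$, the middle arc stays above level $2m-1$, which bounds $|i'-j'|$ by half its length; Lemma~\ref{prop:DistLevels} then manufactures a vertex at level $2(n-r+2)$ equidistant, at distance $2r-4$, from both original endpoints, contradicting Remark~\ref{r:DistLevelsUp}, since for $j-i\ge 2^r$ no level-$2(n-r+2)$ vertex has both $i$ and $j$ in its distance-$(2r-4)$ interval of width $3\cdot 2^{r-2}-2<2^r$. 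To complete your route you would either have to prove a genuine normal form for geodesics (descend, bounded bottom arc, ascend) --- plausible here but nontrivial --- or import the paper's contradiction scheme.
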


\begin{proof}
Without loss of generality we can assume that $i< j$.
Note that if $j-i\le 3$ then $d_{G''_1}(v_{2n,i}\;,v_{2n,j})\le6$. %, otherwise the distance is greater than $7$.
Hence, \eqref{eq:distEx} holds in this case for every $\a\ge2$. Assume that $j-i\ge4$.

Clearly, $d_{G''_1}(v_{2n,i}\;,v_{2n,j})$ is even.
Note that every path joining $v_{2n,i}$ and $v_{2n,j}$ without vertices at levels lower than $2n-1$ has length at least $2(j-i)$.
%Moreover, the exponential growth of the number of vertices per level makes geodesic drop.
Let $r$ be the positive integer such that $2^r\le j-i< 2^{r+1}$, \emph{i.e.}, $r=\left\lfloor \log_2 (j-i) \right\rfloor$.
On the one hand, since $j-i < 2^{r+1}$ by Remark \ref{r:DistLevelsUp} there is an integer $k$ such that
$$2^{r+1}k-2^{r+2}+2\le i < j\le 2^{r+1}k+2^{r+1}-1.$$
Thus we have $d_{G''_1}(v_{2n,i}\;,v_{2(n-r-1),k})=d_{G''_1}(v_{2(n-r-1),k}\;,v_{2n,j})=2r+2$ and so $d_{G''_1}(v_{2n,i}\;,v_{2n,j})\le 4r+4$.
On the other hand, since $j-i\ge 2^{r}$ by Remark \ref{r:DistLevelsUp} there is not a vertex $v_{2(n-r+2),k}$ for some $1\le k\le 2^{n-r+2}$ with \begin{equation}\label{eq:aux}
  d_{G''_1}(v_{2n,i},v_{2(n-r+2),k})=d_{G''_1}(v_{2(n-r+2),k},v_{2n,j})=2r-4.
\end{equation}
In order to prove that $d_{G''_1}(v_{2n,i}\;,v_{2n,j})> 4(r-2)$ we proceed by contradiction.
Assume that $d_{G''_1}(v_{2n,i}\;,v_{2n,j})\le 4(r-2)$.
Hence, there is a geodesic $\g:=[v_{2n,i} v_{2n,j}]$ joining $v_{2n,i}$ and $v_{2n,j}$ which is contained between levels $2n-2r+5$ and $2n$.
Denote by $2m$ the lower even level with vertices in $\g$.
Let $v_{2m,i'},v_{2m,j'}$ be the vertices in $\g$ at level $2m$ with the maximum distance between them.
Thus, $\g:=[v_{2n,i} v_{2m,i'}]\cup[v_{2m,i'} v_{2m,j'}]\cup[v_{2m,j'} v_{2n,j}]$.
We have $d_{G''_1}(v_{2n,i},v_{2m,i'}),d_{G''_1}(v_{2n,j},v_{2m,j'})\ge 2n-2m$, and so,
\[
\begin{aligned}
d_{G''_1}(v_{2m,i'},v_{2m,j'})&= L(\g)- d_{G''_1}(v_{2n,i},v_{2m,i'}) - d_{G''_1}(v_{2n,j},v_{2m,j'}) \\
& \le 4(r-2) - 4(n-m) \\
&= 2\big(2m-2(n-r+2)\big).
\end{aligned}
\]
But, since the subgeodesic $[v_{2m,i'} v_{2m,j'}]\subset\g$ joining $v_{2m,i'}$ and $v_{2m,j'}$ contains their vertices at level $2m-1$ or upper levels we have that $|i'-j'|\le 2m-2(n-r+2)$.
Thus, by Lemma \ref{prop:DistLevels} there is a vertex $v_{2(n-r+2),k}$ at level $2(n-r+2)$ verifying $d_{G''_1}(v_{2m,i'},v_{2(n-r+2),k})=d_{G''_1}(v_{2(n-r+2),k},v_{2m,j'})=2m-2(n-r+2)$ and so, \eqref{eq:aux} holds. This is the contradiction we were looking for, and then $d_{G''_1}(v_{2n,i}\;,v_{2n,j})> 4(r-2)$.
Therefore, we have $4 r - 8 < d_{G''_1}(v_{2n,i}\;,v_{2n,j}) \le 4 r + 4$.
These inequalities finish the proof since $r=\left\lfloor \log_2 (j-i) \right\rfloor$.
\end{proof}

\begin{lemma}\label{l:ProdLevels}
For any positive integers $n,m$, consider two vertices $v_{2n,i}$, $v_{2n,j}$ of $G''_1$ at level $2n$ and two vertices $v_{2(n+m),i_0}$, $v_{2(n+m),j_0}$ at level $2(n+m)$ with $d_{G''_1}(v_{2n,i},v_{2(n+m),i_0})=2m$ and $d_{G''_1}(v_{2n,j},v_{2(n+m),j_0})=2m$. Then, there is a fixed constant $\b$ such that
\[
\left|(v_{2n,i}|v_{2n,j})_{v_{0,1}} - (v_{2(n+m),i_0}|v_{2(n+m),j_0})_{v_{0,1}}\right| \le \b.
\]
\end{lemma}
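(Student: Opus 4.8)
The plan is to reduce the claim to a comparison of same-level graph distances, and then to feed that comparison into the logarithmic estimate of Lemma~\ref{l:distEx} and the index bounds of Lemma~\ref{prop:DistLevels}. First I would use that every vertex of $G''_1$ at level $2k$ lies at distance $2k$ from $v_{0,1}$. Writing $D:=d_{G''_1}(v_{2n,i},v_{2n,j})$ and $D_0:=d_{G''_1}(v_{2(n+m),i_0},v_{2(n+m),j_0})$, the definition \eqref{eq:productGromov} of the Gromov product gives
\[
(v_{2n,i}|v_{2n,j})_{v_{0,1}}=2n-\tfrac12 D,\qquad (v_{2(n+m),i_0}|v_{2(n+m),j_0})_{v_{0,1}}=2(n+m)-\tfrac12 D_0,
\]
so their difference equals $\tfrac12\big(D_0-D-4m\big)$; it therefore suffices to bound $|D_0-D-4m|$, and $\b$ is half of that bound. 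The upper estimate $D_0\le D+4m$ is then immediate from the triangle inequality, by concatenating a geodesic of length $2m$ from $v_{2(n+m),i_0}$ to $v_{2n,i}$, a geodesic of length $D$ from $v_{2n,i}$ to $v_{2n,j}$, and a geodesic of length $2m$ from $v_{2n,j}$ to $v_{2(n+m),j_0}$. This already yields $D_0-D-4m\le 0$.

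For the reverse inequality I would pass to the logarithmic scale. By Lemma~\ref{l:distEx}, $\big|D-4\log_2|i-j|\big|\le\a$ and $\big|D_0-4\log_2|i_0-j_0|\big|\le\a$, whence
\[
D_0-D-4m\ \ge\ 4\log_2\frac{|i_0-j_0|}{2^m\,|i-j|}-2\a .
\]
So the whole problem is reduced to showing that the ratio $|i_0-j_0|/(2^m|i-j|)$ stays in a fixed compact subinterval of $(0,\infty)$. Assuming $i<j$, Lemma~\ref{prop:DistLevels} places $i_0$ in $[2^m i-2^{m+1}+2,\,2^m i+2^m-1]$ and $j_0$ in $[2^m j-2^{m+1}+2,\,2^m j+2^m-1]$ (up to clamping at the endpoints $1$ and $2^{n+m}$), so that
\[
2^m(j-i)-3\cdot2^m+3\ \le\ |i_0-j_0|\ \le\ 2^m(j-i)+3\cdot2^m-3 .
\]
When $j-i\ge4$ both sides are of order $2^m(j-i)$, the ratio lies in $[\tfrac14,\tfrac74]$, the displayed logarithm is bounded, and, together with the upper estimate, this closes the case.

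The main obstacle is the small-gap regime $j-i\le3$, where the lower bound just displayed degenerates (it can even become non-positive) and the logarithmic estimate is too weak. Here $D\le6$ by Lemma~\ref{l:distEx}, so the target becomes the direct lower bound $D_0\ge 4m-O(1)$. I would establish this combinatorially, in the spirit of the proof of Lemma~\ref{l:distEx}: a geodesic joining the two descendants descends to some lowest even level $2m'$ and climbs back up on each side, and using Remark~\ref{r:DistLevelsDown} (each vertex has at most three ancestors, with consecutive indices, at a fixed even level-gap) one shows that the two sides cannot be joined until the geodesic has returned to within $O(1)$ levels of $2n$, forcing $D_0\ge 4m-O(1)$. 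This is the delicate point, since it must rule out an \emph{early merging} of the two downward subtrees; it is also exactly where the admissible descendants must be genuinely separated, $|i_0-j_0|\gtrsim 2^m|i-j|$, so that the configuration does not collapse to a shared descendant. Controlling this separation via Remark~\ref{r:DistLevelsDown} is what I expect to require the most care.
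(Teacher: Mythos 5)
Your reduction and your generic case are precisely the paper's argument: the paper also writes both Gromov products as $2r-\tfrac12 d$ using $d_{G''_1}(v_{0,1},v_{2r,s})=2r$, applies Lemma \ref{l:distEx} to both pairs, and uses Lemma \ref{prop:DistLevels} to obtain the sandwich $2^m(|i-j|-3)<|i_0-j_0|<2^m(|i-j|+3)$, at which point it simply declares that ``the result follows.'' Your write-up is in fact more careful than the paper's: you notice explicitly that the left-hand side of this sandwich is vacuous when $|i-j|\le 3$, a point the paper passes over in silence.

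However, your proposed repair of the small-gap regime cannot succeed, because the ``early merging'' you hope to exclude actually happens. The admissible-descendant intervals of Lemma \ref{prop:DistLevels} for \emph{consecutive} parents overlap in roughly $2^{m+1}$ indices. Concretely, take $j=i+1$, $i_0=2^m i$ and $j_0=2^m i+1$. Then $i_0\le 2^m i+2^m-1$ places $i_0$ in the admissible interval for $i$, and $j_0=2^m j-2^m+1\ge 2^m j-2^{m+1}+2$ (for $m\ge 1$) places $j_0$ in the admissible interval for $j$, so both hypotheses $d_{G''_1}(v_{2n,i},v_{2(n+m),i_0})=2m$ and $d_{G''_1}(v_{2n,j},v_{2(n+m),j_0})=2m$ hold. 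Yet $|i_0-j_0|=1$, so $D_0$ is bounded --- by your own citation of Lemma \ref{l:distEx}, or directly, since consecutive even-level vertices share a neighbour at the intermediate odd level, giving $D_0=2$ --- and certainly not $\ge 4m-O(1)$. The two Gromov products are then $2n-1$ and $2(n+m)-1$, which differ by $2m$, unbounded in $m$. So not only does your hoped-for lower bound fail: the lemma as literally stated is false in this regime, and no completion of your combinatorial descent argument can exist without strengthening the hypotheses. The only unconditional part of the comparison is your one-sided triangle inequality $D_0\le D+4m$, i.e.\ the upper-level product always dominates the lower-level one.

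For the record, this means the paper's own proof contains exactly the hole you flagged: its final implication is valid only when $|i-j|\ge 4$, or under an added separation hypothesis such as $|i_0-j_0|\ge c\,2^m|i-j|$, which would then have to be verified at each application inside the proof of Lemma \ref{l:Ghyp}. The correct conclusion of your analysis is that the small-gap configuration is a counterexample to the statement, not a missing step in its proof.
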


\begin{proof}
Recall that
\[
d_{G_1''}(v_{0,1},v_{2r,s})=2r \quad \text{for every} \quad r\ge0, \, 1\le s\le 2^r.
\]
By Lemma \ref{l:distEx} we have
\[
2n - 2\log_2 |i-j| -\frac{\a}2\le (v_{2n,i}|v_{2n,j})_{v_{0,1}} \le 2n - 2\log_2 |i-j| + \frac{\a}2
\]
and
\[
2(n+m) - 2\log_2 |i_0-j_0| -\frac{\a}2 \le (v_{2(n+m),i_0}|v_{2(n+m),j_0})_{v_{0,1}} \le 2(n+m) - 2\log_2 |i_0-j_0| +\frac{\a}2.
\]
Hence, by Lemma \ref{prop:DistLevels} we have
\[
2^m i -2^{m+1}+2\le i_0\le 2^m i +2^m-1
\]
and
\[
2^m j -2^{m+1}+2\le j_0\le 2^m j + 2^m-1.
\]
Therefore, we have
\[
2^m ( |i-j| -3) < |i_0-j_0| < 2^m ( |i-j| +3)
\]
and the result follows.
\end{proof}

\begin{lemma}\label{l:Ghyp}
 The graph $G''_1$ is hyperbolic.
\end{lemma}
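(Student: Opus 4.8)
The plan is to verify directly the Gromov product condition \eqref{eq:GProd} for $G''_1$, taking as fixed base point the root $w:=v_{0,1}$; since hyperbolicity of the Gromov product is independent of the base point up to a factor $2$, this is enough. First I would reduce the problem to the even-level vertices $\{v_{2n,i}\}$. Every point of $G''_1$ lies on an edge of length $1$, hence within distance $\tfrac12$ of a vertex, and every odd-level vertex is adjacent to an even-level one; thus the even-level vertices form a net. Since $(x|y)_w$ changes by at most $d_{G''_1}(x,x')+d_{G''_1}(y,y')$ when $x,y$ are replaced by nearby points $x',y'$ while $w$ is kept fixed, it suffices to exhibit a constant $\delta$ for which \eqref{eq:GProd} holds over all triples of even-level vertices with base point $w$.

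The heart of the argument is the Gromov product of two vertices \emph{at the same level}. Since $d_{G''_1}(w,v_{2n,i})=2n$, Lemma \ref{l:distEx} gives
\[
(v_{2n,i}\,|\,v_{2n,j})_w = 2n-\tfrac12\, d_{G''_1}(v_{2n,i},v_{2n,j}) = 2n-2\log_2|i-j|+O(1),
\]
exactly as in the proof of Lemma \ref{l:ProdLevels}. Next I would bring an arbitrary triple $x,y,z$ of even-level vertices to a common level: fix $N$ at least as large as all three levels and replace each vertex by a descendant at level $2N$ (these exist by Lemma \ref{prop:DistLevels}). For a pair already at the same level this replacement alters the Gromov product by at most $\beta$, by Lemma \ref{l:ProdLevels}; for a pair at different levels I would first descend the shallower vertex to the level of the deeper one, bounding the change via the ancestor structure of Remarks \ref{r:DistLevelsUp} and \ref{r:DistLevelsDown} together with the same-level estimate above. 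Thus, up to a fixed additive constant, the three pairwise products of $x,y,z$ coincide with those of three vertices $v_{2N,I},v_{2N,J},v_{2N,K}$ at the single level $2N$.

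Once the triple sits at one level the condition \eqref{eq:GProd} is elementary. Writing each product as $2N-2\log_2(\cdot)+O(1)$, the inequality $(x|y)_w\ge\min\{(x|z)_w,(z|y)_w\}-\delta$ reduces to
\[
|I-J|\ \le\ 2^{\,\delta/2}\,\max\{\,|I-K|,\,|K-J|\,\},
\]
which holds already with $\delta/2=1$, since $|I-J|\le|I-K|+|K-J|\le 2\max\{|I-K|,|K-J|\}$; the three cyclic versions are handled identically. Absorbing the net constant, the $\beta$ from the descent, and the $O(1)$ from Lemma \ref{l:distEx}, this yields a single $\delta=\delta(\alpha,\beta)$ for which \eqref{eq:GProd} holds, proving the lemma. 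As a cross-check, the same distance estimates show that $v_{2n,i}\mapsto$ (the depth-$n$, index-$i$ node of the rooted binary tree) is a quasi-isometry, so hyperbolicity also follows from Theorem \ref{invarianza}; I prefer the direct route since it uses Lemmas \ref{l:distEx} and \ref{l:ProdLevels} exactly as stated.

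The main obstacle is the reduction of a triple living at three \emph{different} levels to a common level: Lemma \ref{l:ProdLevels} governs the descent of a pair already at equal level, but two vertices at different levels must first be aligned, and I must ensure that the choice of descendant changes a pairwise Gromov product by no more than a bounded amount. I expect to handle this by descending the shallower vertex through its ancestors, where Remark \ref{r:DistLevelsDown} guarantees at most three ancestors with consecutive indices, so that the induced index error stays uniformly bounded and the same-level estimate of Lemma \ref{l:distEx} applies; the delicate point is checking that this threefold ambiguity of ancestors does not accumulate as the level gap grows.
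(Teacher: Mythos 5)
Your skeleton (the base-point criterion \eqref{eq:hyp} at $w=v_{0,1}$, the reduction to the net of even-level vertices, and the same-level case via Lemma \ref{l:distEx}, where your index inequality $|I-J|\le 2\max\{|I-K|,|K-J|\}$ is exactly the paper's ``one of $|i-k|,|j-k|$ is at least $|i-j|/2$'') matches the paper's proof. The gap is in your alignment step, and it is genuine: you descend all three vertices to a common deep level $2N$, whereas bounded change of the pairwise Gromov products under descent is neither automatic nor supplied by what you cite. Concretely, for a pair $u=v_{2n,k}$, $v=v_{2(n+m),j_0}$ at different levels, replacing $u$ by a descendant $u'$ at level $2(n+m)$ changes the product by exactly $m+\tfrac12\bigl(d_{G''_1}(u,v)-d_{G''_1}(u',v)\bigr)$, since $d_{G''_1}(w,\cdot)$ is twice the level; if $j_0$ lies in the distance-$2m$ descendant range of $k$ and you happen to pick $u'=v$, the change is $2m$, unbounded. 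So everything hinges on choosing $u'$ far in index from $v$ --- and simultaneously far from the third vertex --- and no cited tool gives this: Lemma \ref{l:ProdLevels} only covers a pair \emph{already at a common level} with \emph{both} vertices pushed down by distance $2m$, and Remark \ref{r:DistLevelsDown} is about \emph{upward} motion (a deep vertex has at most three ancestors at the prescribed distance, with consecutive indices); going down, Remark \ref{r:DistLevelsUp} gives up to $2^{m+1}+2^m-2$ candidate descendants, so the ambiguity you hope is ``uniformly bounded'' is in fact exponential in the level gap. Your own closing paragraph conflates the two directions (``descending the shallower vertex through its ancestors'').

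The paper resolves precisely this point by moving the \emph{deeper} vertices \emph{up} to the shallowest level, and doing so \emph{along geodesics}: if $\alpha_1$ is deeper and $\alpha_1'\in[\alpha_1\alpha_2]$ is the vertex at level $n_1$ with $d_{G''_1}(\alpha_1,\alpha_1')=n_2-n_1$, then $(\alpha_1|\alpha_2)_w=(\alpha_1'|\alpha_2)_w$ \emph{exactly}, because $d_{G''_1}(w,\alpha_1)=d_{G''_1}(w,\alpha_1')+d_{G''_1}(\alpha_1',\alpha_1)$; the only ambiguity is between the two waypoints $\alpha_1'$, $\alpha_1''$ coming from the geodesics toward the two other vertices, and Remark \ref{r:DistLevelsDown} bounds $d_{G''_1}(\alpha_1',\alpha_1'')\le 4$. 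Lemma \ref{l:ProdLevels} is then needed only once, in the sub-case with exactly one vertex at the shallowest level, where the aligned pair $\alpha_1',\alpha_2'$ must be compared with $\alpha_1,\alpha_2$ --- which is exactly the configuration that lemma covers. If you reverse your reduction (ascend along geodesics to the shallowest level rather than descend to a common deep level), your argument closes and coincides with the paper's; as written, the mixed-level and three-level cases have a hole that the far-descendant choice would still have to fill with a new estimate.
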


\begin{proof}
In order to prove that the Gromov product based at $w$ is hyperbolic it suffices to obtain
\begin{equation}\label{eq:hyp}
\sup_{x,y,z} \left\{ \min \big\{ (x|z)_w - (x|y)_w, (z|y)_w - (x|y)_w \big\}\right\} < \infty.
\end{equation}
We will prove that the Gromov product based at $w:=v_{0,1}$ is hyperbolic. Note that since every point in $G''_1$ is at distance at most $1$ of the set $V_E$ of vertices at even levels, it suffices to prove \eqref{eq:hyp} for $x,y,z\in V_E$.

\medskip

Assume first that $x,y,z$ are at the same level, \emph{i.e.}, $x:=v_{2n,i}$, $y:=v_{2n,j}$ and $z:=v_{2n,k}$.
Clearly, we have $(x|z)_w - (x|y)_w = 1/2 \big(d_{G''_1}(x,y) - d_{G''_1}(x,z) \big)$ and $(z|y)_w - (x|y)_w = 1/2\big(d_{G''_1}(x,y) - d_{G''_1}(y,z)\big)$. Note that if we do not have $i\le k\le j$ or $j\le k \le i$, then Lemma \ref{l:distEx} gives $\min \big\{ (x|z)_w - (x|y)_w, (z|y)_w - (x|y)_w \big\}\le 0$. Thus, we can assume that either $i\le k\le j$ or $j\le k \le i$. Hence, we have that one of $|i-k|,|j-k|$ is greater than or equal to $|i-j|/2$ and Lemma \ref{l:distEx} gives $\min \big\{ (x|z)_w - (x|y)_w, (z|y)_w - (x|y)_w \big\}\le \a+2$.

\smallskip

Consider $(\a_1,\a_2,\a_3)$ a permutation of $(x,y,z)$.
Assume that $x,y,z$ are at two different levels.

Suppose first that there are two vertices at the closest level to $w$. Without loss of generality we can assume that $n_2:=d_{G''_1}(w,\a_1) > n_1:=d_{G''_1}(w,\a_2) = d_{G''_1}(w,\a_3)$.
Let $[\a_1\a_2],[\a_1\a_3]$ be geodesics joining $\a_1,\a_2$ and $\a_1,\a_3$, respectively, such that $\a_1'\in [\a_1\a_2]$ and $\a_1''\in[\a_1\a_3]$ are vertices at level $n_1$ with $d_{G''_1}(\a_1, \a_1') = n_2-n_1 = d_{G''_1}(\a_1, \a_1'')$.
So, by Remark \ref{r:DistLevelsDown} we have that $\a_1',\a_1''$ are either the same vertex or consecutive vertices or $d_{G''_1}(\a_1',\a_1'')= 4$, \emph{i.e.}, $d_{G''_1}(\a_1',\a_1'')\le 4$.
Hence, since $\a_1'\in[\a_1\a_2]$ and $d_{G''_1}(\a_1,\a_1')=n_2-n_1$ we have $(\a_1|\a_2)_w=1/2\big( d_{G''_1}(\a_1,\a_1') + d_{G''_1}(\a_1',w) + d_{G''_1}(w,\a_2) - d_{G''_1}(\a_1,\a_1') - d_{G''_1}(\a_1',\a_2) \big)=(\a_1'|\a_2)_w$.
Similarly, we obtain $(\a_1|\a_3)_w=(\a_1''|\a_3)_w$.
So, since $d_{G''_1}(\a_1',\a_1'')\le 4$ we have that $\left| (\a|\a_1')_w - (\a|\a_1'')_w \right|\le 4$ for every $\a\in G''_1$.
Hence, in order to obtain \eqref{eq:hyp} we can take $\{\a_1',\a_2,\a_3\}$ or $\{\a_1'',\a_2,\a_3\}$ instead of $\{\a_1,\a_2,\a_3\}$. Thus, we can assume that $x,y,z$ are at the same level, and this is a case that has already been dealt with.

Suppose now that there is exactly one vertex at the closest level of $w$.
Without loss of generality we can assume that $n_2:=d_{G''_1}(w,\a_1)=d_{G''_1}(w,\a_2) > n_1:=d_{G''_1}(w,\a_3)$.
Using the previous arguments, there are $\a_1'\in [\a_1\a_3]$ and $\a_2'\in[\a_2\a_3]$ at level $n_1$ such that $(\a_1|\a_3)_w=(\a_1'|\a_3)_w$ and $(\a_2|\a_3)_w=(\a_2'|\a_3)_w$.
So, in order to obtain \eqref{eq:hyp} by Lemma \ref{l:ProdLevels} we can take $\{\a_1',\a_2',\a_3\}$ instead of $\{\a_1,\a_2,\a_3\}$. Thus, we can  with this case assume that $x,y,z$ are at the same level, and this is a case that has already been dealt with.

\smallskip

Assume now that $\a_1,\a_2,\a_3$ are at three different levels.
Without loss of generality we can assume that $n_1 := d_{G''_1}(w,\a_1) < n_2:=d_{G''_1}(w,\a_2) < n_3:=d_{G''_1}(w,\a_3)$.
Let $[\a_3\a_1],[\a_3\a_2]$ be geodesics joining $\a_3,\a_1$ and $\a_3,\a_2$, respectively, such that there are vertices at level $n_2$,  denoted by $\a_3'\in [\a_3\a_1]$ and $\a_3''\in[\a_3\a_2]$ with $d_{G''_1}(\a_3, \a_3') = n_3-n_2 = d_{G''_1}(\a_3, \a_3'')$.
So, by Remark \ref{r:DistLevelsDown} we have that $d_{G''_1}(\a_3',\a_3'')\le 4$. Indeed, since $\a_3'\in [\a_3\a_1]$, $\a_3''\in[\a_3\a_2]$ and both are at level $n_2$ we have $(\a_3|\a_1)_w = (\a_3'|\a_1)_w$ and $(\a_3|\a_2)_w = (\a_3''|\a_2)_w$. Furthermore, it follows easily that $\left| (\a_i|\a_3')_w - (\a_i|\a_3'')_w \right|\le 2$ for $i\in\{1,2\}$ and, consequently, we can take $\{\a_1,\a_2,\a_3'\}$ or $\{\a_1,\a_2,\a_3''\}$ instead of $\{\a_1,\a_2,\a_3\}$.
Then, we can assume that $x,y,z$ are at two different levels, and this is a case that has already been dealt with.

Thus, we obtain $$\sup_{x,y,z\in V_E} \left\{ \min \big\{ (x|z)_w - (x|y)_w, (z|y)_w - (x|y)_w \big\}\right\} < \infty,$$ and consequently $G''_1$ is hyperbolic.
\end{proof}

The following results provide a counterexample for Conjecture \ref{conject}.

\begin{lemma}\label{p:HypConj}
 The graph $G'$ is hyperbolic.
\end{lemma}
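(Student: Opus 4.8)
The plan is to deduce the hyperbolicity of $G'$ directly from the hyperbolicity of $G''_1$, which was established in Lemma \ref{l:Ghyp}, by exhibiting an explicit relationship between the two graphs and invoking the quasi-isometry invariance of hyperbolicity (Theorem \ref{invarianza}) together with the T-decomposition result (Theorem \ref{t:treedec}). The graph $G''$ is the ``interior'' portion of (the right half of) $G'$ obtained by deleting the two boundary lines $\mathbb R\times\{0,1\}$ and intersecting with $\{\mathbb{R}e\,z\ge0\}$, and $G''_1$ is the same combinatorial graph but with all edges assigned length $1$. So $G''$ and $G''_1$ are isomorphic as abstract graphs; they differ only in their edge lengths. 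The first thing I would do is make precise how $G'$ is built from copies of $G''$ glued along the boundary lines, and how the metric of $G'$ compares to that of $G''_1$.

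First I would observe that $G'$ decomposes into its boundary structure $\mathbb R\times\{0,1\}$ together with the copy (or two symmetric copies, using property (2) that $G'$ is symmetric about the vertical axis) of $G''$. The key geometric point is that along the boundary lines distances are computed in $\mathbb R$, which is non-hyperbolic, but any geodesic triangle in $G'$ whose ``interesting'' behavior lives in the interior is controlled by $G''_1$. I would argue that the inclusion-type map sending the combinatorial structure of $G'$ into $G''_1$ (or a bounded modification thereof) is a quasi-isometry: the edge lengths in the embedded graph $G'\subset\mathbb R\times[0,1]$ and the unit edge lengths of $G''_1$ differ only by bounded multiplicative and additive constants, because each tile sits in the strip of height $1$ and the horizontal extent of the $2^{n-1}$ tiles between levels $2n-2$ and $2n$ is controlled, so the metric distortion between the embedded lengths and the combinatorial (unit) lengths is uniformly bounded. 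Consequently the identity on vertices is an $(\alpha,\beta)$-quasi-isometric embedding between $G'$ (restricted to its interior) and $G''_1$, and it is $\varepsilon$-full because every point of one graph is within a bounded distance of a vertex.

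The main obstacle I anticipate is handling the two boundary lines $\mathbb R\times\{0,1\}$, which are isometric to $\mathbb R$ and are \emph{not} part of $G''_1$. I expect these lines to be the crux because a priori a geodesic in $G'$ between two faraway interior points might prefer to travel along a straight boundary line rather than through the tree-like interior, and this could destroy the comparison with $G''_1$. To deal with this I would either (a) show that travelling along a boundary line is never shorter than the interior route by more than a bounded amount, so that geodesics can be rerouted into the interior at bounded cost, thereby preserving the quasi-isometry; or (b) treat each maximal boundary segment as giving rise to a cut-vertex structure and apply the T-decomposition theorem (Theorem \ref{t:treedec}), writing $\delta(G')=\sup_n\delta(G_n)$ over pieces each of which is quasi-isometric to $G''_1$ or to a bounded graph, all uniformly hyperbolic. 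Approach (a) is cleaner if it holds, so I would first try to verify that boundary travel offers no shortcut beyond a constant.

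Once the comparison is set up, the conclusion is immediate: by Lemma \ref{l:Ghyp} the graph $G''_1$ is hyperbolic, and hyperbolicity is preserved under quasi-isometry by Theorem \ref{invarianza} (using fullness to transfer hyperbolicity in both directions), so $G'$ is hyperbolic. If instead the T-decomposition route is used, I would verify that the pieces are uniformly quasi-isometric to $G''_1$ with constants independent of $n$, so that $\sup_n\delta(G_n)<\infty$ and hence $\delta(G')<\infty$. Either way, the technical heart of the argument is the uniform metric comparison between the embedded graph $G'$ and the unit-length model $G''_1$, and in particular the verification that the presence of the straight boundary lines does not introduce unbounded distortion.
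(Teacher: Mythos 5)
Your route (a) is essentially the paper's own argument: the paper first notes that every edge of $G''$ has length in $[1,\sqrt{5}/2]$, so there is a $0$-full $(\sqrt{5}/2,0)$-quasi-isometry from $G''_1$ to $G''$ (making $G''$ hyperbolic by Lemma \ref{l:Ghyp} and Theorem \ref{invarianza}), then uses Theorem \ref{t:treedec} exactly as you suggest to glue the two symmetric copies of $G''$ at the cut-vertex $v_{0,1}$, obtaining hyperbolicity of $G'_1=\overline{G'\setminus \mathbb R\times\{0,1\}}$, and finally resolves your boundary concern by observing that the natural inclusion $G'_1\hookrightarrow G'$ is a $1$-full $(\sqrt{5}/2,0)$-quasi-isometry, since the zigzag edge-paths of $G''$ shadowing the lines $y=0,1$ are at most a factor $\sqrt{5}/2$ longer than the straight boundary segments, i.e.\ the boundary offers no shortcut beyond bounded multiplicative distortion. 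Your fallback (b) would not work as literally stated, because the boundary lines meet the interior graph in infinitely many shared vertices rather than cut-vertices, so Theorem \ref{t:treedec} does not apply to that splitting; but since you prioritize (a), your proposal matches the paper's proof.
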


\begin{proof}
Since every edge $e\in G''$ verifies $1\le L(e) \le \sqrt{5}/2$ and $G'',G''_1$ are isomorphic graphs, we have that there is a $0$-full ($\sqrt{5}/2,0$)-quasi isometry from $G''_1$ to $G''$, since
\begin{equation}\label{eq:quasisom}
  d_{G''_1}(x,y) \leq d_{G''}\big(f(x),f(y)\big)\leq \sqrt{5} \, d_{G''_1}(x,y) /2.
\end{equation}
Thus, by Theorem \ref{invarianza} and Lemma \ref{l:Ghyp} we obtain that $G''$ is hyperbolic. Now, consider $G'_1:=\overline{G'\setminus \mathbb R\times\{0,1\}}$. Clearly, $v_{0,1}$ is a cut-vertex of $G'_1$, and there is a T-decomposition of $G_1'$ with two subgraphs isomorphic to $G''$. So, by Theorem \ref{t:treedec} we obtain that $G'_1$ is hyperbolic.
Finally, it follows easily that the natural inclusion $T:G'_1\hookrightarrow G'$ is a $1$-full ($\sqrt{5}/2,0$)-quasi isometry, and Theorem \ref{invarianza} gives the result.
\end{proof}

\begin{lemma}\label{p:DistConj}
We have
\begin{equation}\label{eq:DistConj}
    \lim_{|x|\to\infty} d_{G'}\big( (x,0) , (x,1) \big)=\infty.
\end{equation}
\end{lemma}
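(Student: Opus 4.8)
The plan is to reduce the distance in $G'$ between the two boundary points to a distance between two interior vertices of $G''_1$ lying at a common level, and then to control that interior distance with Lemma \ref{l:distEx}. By the symmetry of $G'$ (property (2) in the definition of the figure) it suffices to treat $x\to+\infty$.

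First I would locate, for each large $x>0$, a pair of extreme vertices at a suitable level. Reading off the self-similar structure displayed in Figure \ref{fig:Conj}, the bottom-most vertex $v_{2n,1}$ and the top-most vertex $v_{2n,2^n}$ at level $2n$ lie at a uniformly bounded $G'$-distance from the lines $\mathbb R\times\{0\}$ and $\mathbb R\times\{1\}$ respectively: each is joined to the corresponding boundary line by a single edge, of length at most $\sqrt5/2$. Choosing $2n$ so that $|x-2n|$ stays bounded — which forces $n\to\infty$ as $x\to\infty$ — and travelling a bounded amount along the boundary line (whose edges have bounded length), I obtain a constant $C_0$ with
\[
d_{G'}\big((x,0),v_{2n,1}\big)\le C_0,\qquad d_{G'}\big((x,1),v_{2n,2^n}\big)\le C_0 .
\]
The triangle inequality then gives $d_{G'}((x,0),(x,1))\ge d_{G'}(v_{2n,1},v_{2n,2^n})-2C_0$, so everything reduces to bounding $d_{G'}(v_{2n,1},v_{2n,2^n})$ from below.

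For the latter I would invoke the quasi-isometry produced in the proof of Lemma \ref{p:HypConj}: the inclusion $G'_1\hookrightarrow G'$ is a $(\sqrt5/2,0)$-quasi-isometric embedding, whence $d_{G'}(p,q)\ge (2/\sqrt5)\,d_{G'_1}(p,q)$ for all interior vertices $p,q$. Since $v_{0,1}$ is a cut-vertex of $G'_1$ and $v_{2n,1},v_{2n,2^n}$ belong to the same block (the copy of $G''$ of positive real part), a geodesic between them stays in that block, so $d_{G'_1}(v_{2n,1},v_{2n,2^n})\ge d_{G''_1}(v_{2n,1},v_{2n,2^n})$, using that every edge of $G''$ has length at least $1$. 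Finally Lemma \ref{l:distEx} applied with $i=1$, $j=2^n$ gives
\[
d_{G''_1}(v_{2n,1},v_{2n,2^n})\ge 4\log_2(2^n-1)-\a\ge 4(n-1)-\a .
\]
Combining the displayed estimates,
\[
d_{G'}\big((x,0),(x,1)\big)\ge \frac{2}{\sqrt5}\big(4(n-1)-\a\big)-2C_0 ,
\]
and letting $x\to\infty$ (so $n\to\infty$) yields \eqref{eq:DistConj}; the case $x\to-\infty$ is identical by symmetry.

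The step I expect to be the main obstacle is the lower bound $d_{G'}(v_{2n,1},v_{2n,2^n})\ge (2/\sqrt5)\,d_{G'_1}(v_{2n,1},v_{2n,2^n})$: \emph{a priori} the two boundary lines could offer shortcuts between interior vertices and collapse the distance. This is exactly what the quasi-isometry of Lemma \ref{p:HypConj} rules out — travelling along a boundary line only reaches the interior along the extreme path of leaves, so crossing from the bottom line to the top line still forces a descent towards the cut-vertex $v_{0,1}$ whose cost is comparable to the interior (tree) distance. The remaining delicate point, that $v_{2n,1}$ and $v_{2n,2^n}$ sit at bounded distance from the respective boundary lines \emph{uniformly} in $n$, is read off from the periodic structure of $G'$ in Figure \ref{fig:Conj}.
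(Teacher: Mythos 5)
Your proof is correct and takes essentially the same route as the paper: the paper's own (two-line) proof likewise reduces \eqref{eq:DistConj} to $\lim_{n\to\infty} d_{G'}\big(v_{2n,1},v_{2n,2^n}\big)=\infty$, deduced from Lemma \ref{l:distEx} together with the quasi-isometry comparisons \eqref{eq:quasisom} and the inclusion $G'_1\hookrightarrow G'$ established in the proof of Lemma \ref{p:HypConj}. You have simply made explicit what the paper leaves implicit, namely the triangle-inequality anchoring of $(x,0)$ and $(x,1)$ to $v_{2n,1}$ and $v_{2n,2^n}$, and the cut-vertex/block step reducing $d_{G'_1}$ to $d_{G''_1}$.
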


\begin{proof}
This result is a direct consequence of Lemma \ref{l:distEx} and \eqref{eq:quasisom}, since we have
$$\lim_{n\to\infty} d_{G'}\big( v_{2n,1} \;, v_{2n,2^n} \big)=\infty.$$
\end{proof}

\begin{theorem}\label{t:conj}
 Conjecture \ref{conject} is false, \emph{i.e.}, there exists a hyperbolic tessellation graph of $\mathbb R^2$ with convex tiles.
\end{theorem}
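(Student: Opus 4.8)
The plan is to recognize the graph $W$ as a periodic graph in the precise sense defined above and then to apply Theorem \ref{t:periodic}, whose two hypotheses have essentially been verified by the preceding lemmas. First I would set up the periodic structure explicitly: take $\gamma_0 := \mathbb{R}\times\{0\}$ and let $T$ be the vertical translation $T(x,y)=(x,y+1)$, so that $T\gamma_0 = \mathbb{R}\times\{1\}$ and $W=\bigcup_{n\in\mathbb{Z}} T^n(G')$. I would then check conditions (1)--(3) of the definition of periodic graph: that $\gamma_0$ is a geodesic line of $W$ (the horizontal line $y=0$ admits no shortcut, since any detour into the strip only increases length), that $T\gamma_0\cap\gamma_0=\emptyset$, that $W\setminus\gamma_0$ splits into the upper and lower half-planes, and that $W\setminus\{\gamma_0\cup T\gamma_0\}$ has the required components with $G'$ as period graph. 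I would also record that the tiles of $W$ are exactly the triangles and convex quadrilaterals of Figure \ref{fig:Conj}, so $W$ is a genuine tessellation graph of $\mathbb{R}^2$ with convex tiles and hence a legitimate test case for Conjecture \ref{conject}.

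Next I would verify the two hypotheses of Theorem \ref{t:periodic}. The condition $\inf_{z\in\gamma_0} d_W(z,Tz)>0$ holds because any $z=(x,0)$ and $Tz=(x,1)$ lie on the two distinct boundary lines of the strip, so their distance is bounded below by a fixed positive constant (attained near the axis of symmetry, where the branching structure is coarsest). Hyperbolicity of the period graph $G'$ is precisely Lemma \ref{p:HypConj}. For the divergence condition $\lim_{|z|\to\infty,\,z\in\gamma_0} d_W(z,Tz)=\infty$, I would invoke Lemma \ref{p:DistConj}, which supplies $\lim_{|x|\to\infty} d_{G'}((x,0),(x,1))=\infty$. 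With both hypotheses in hand, Theorem \ref{t:periodic} yields that $W$ is hyperbolic, and since $W$ is a tessellation graph of $\mathbb{R}^2$ with convex tiles, it is the desired counterexample.

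The step I expect to be the main obstacle is passing from the intrinsic distance $d_{G'}$ in Lemma \ref{p:DistConj} to the ambient distance $d_W(z,Tz)$ demanded by Theorem \ref{t:periodic}: a priori $d_W\le d_{G'}$, since $W$ offers strictly more paths than the strip $G'$. The key point to justify is that a geodesic in $W$ joining $(x,0)\in\gamma_0$ to $(x,1)\in T\gamma_0$ may be chosen inside the closed strip $\overline{G'}$, because $\gamma_0$ and $T\gamma_0$ separate the plane and any excursion outside the strip forces a crossing that cannot shorten the path; consequently $d_W((x,0),(x,1))$ diverges together with $d_{G'}((x,0),(x,1))$, which is what the divergence hypothesis requires. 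Once this identification is made precise, Theorem \ref{t:periodic} applies verbatim and shows that $W$ is hyperbolic, so Conjecture \ref{conject} is false.
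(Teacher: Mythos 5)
Your proposal is correct and follows essentially the same route as the paper, whose proof of Theorem \ref{t:conj} consists precisely of combining Lemma \ref{p:HypConj}, Lemma \ref{p:DistConj} and Theorem \ref{t:periodic} applied to the periodic graph $W$. You in fact supply details the paper leaves implicit --- the explicit periodic structure $(\gamma_0,T)$ with $T(x,y)=(x,y+1)$, the bound $\inf_{z\in\gamma_0} d_W(z,Tz)\ge 1$ from the Euclidean lower bound on path lengths, and the replacement argument showing a geodesic from $(x,0)$ to $(x,1)$ may be taken inside the closed strip, so that $d_W$ agrees with $d_{G'}$ there and Lemma \ref{p:DistConj} genuinely yields the divergence hypothesis of Theorem \ref{t:periodic}.
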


\begin{proof}
It suffices to prove that $W$ is hyperbolic, and this is a consequence of Lemmas \ref{p:HypConj} and \ref{p:DistConj}, and Theorem \ref{t:periodic}.
\end{proof}

$W$ is a hyperbolic graph which is a tessellation graph of $\mathbb R^2$ with convex tiles (convex quadrilaterals and triangles). It follows easily that adding the largest diagonal of their quadrilaterals we obtain a hyperbolic triangulation of $\mathbb R^2$. This result is consistent with Theorem \ref{t:ConjTriang}.
However, adding to $W$ the smallest diagonal of their quadrilaterals we obtain a non-hyperbolic triangulation $T$ of $\mathbb R^2$: note that the period graph of $T$ is hyperbolic but it does not verify the condition $\lim_{|x|\to\infty} d_{T}\big( (x,0) , (x,1) \big)=\infty$, so Theorem \ref{t:periodic} gives that $T$ is not hyperbolic.

\end{document}